\theoremstyle{plain}
\newtheorem{pro}{\hspace{6mm}Proposition}[section]
\newtheorem{lem}{\hspace{6mm}Lemma}[section]
\theoremstyle{definition}
\theoremstyle{remark}
\newcommand{\V}[1]{\mathbf{#1}}
\newcommand{\email}[1]{\href{mailto:#1}{#1}}
\title{
A general regularization strategy for singular Stokes problems and convergence analysis for corresponding discretization and iterative solution 
}
\author{
Weizhang Huang\thanks{Department of Mathematics, the University of Kansas, 1460 Jayhawk Blvd, Lawrence, KS 66045, USA (\email{whuang@ku.edu}).}
\and
Zhuoran Wang\thanks{Department of Mathematics, the University of Kansas, 1460 Jayhawk Blvd, Lawrence, KS 66045, USA (\email{wangzr@ku.edu}).}
}
\date{} 
\begin{document}

\maketitle

\textbf{Abstract.}
A general regularization strategy is considered for the efficient iterative solution
of the lowest-order weak Galerkin approximation of singular Stokes problems.
The strategy adds a rank-one regularization term to the zero (2,2) block of the underlying
singular saddle point system. This strategy includes the existing pressure pinning and
mean-zero enforcement regularization as special examples. It is shown that
the numerical error maintains the optimal-order convergence provided that the nonzero
Dirichlet boundary datum is approximated numerically with sufficient accuracy.
Inexact block diagonal and triangular Schur complement preconditioners
are considered for the regularized system.
The convergence analysis for MINRES and GMRES with corresponding block preconditioners
is provided for different choices of the regularization term. Numerical experiments
in two and three dimensions are presented to verify the theoretical findings
and the effectiveness of the preconditioning for solving the regularized system. 

\vspace{10pt}

\noindent
\textbf{Keywords:}
Stokes flow, Regularization, MINRES and GMRES, Preconditioning, Weak Galerkin.

\vspace{5pt}

\noindent
\textbf{Mathematics Subject Classification (2020):}
65N30, 65F08, 65F10, 76D07

\section{Introduction}
\label{SEC:intro}
We consider a general regularization strategy for singular Stokes flow problems and their iterative solution
with block Schur complement preconditioning.
Let $ \Omega \subset \mathbb{R}^d$ $(d=2,3) $ be a bounded, connected polygonal/polyhedral domain with Lipschitz boundary $\partial \Omega$. The governing equation for the Stokes flow is given by:
\begin{equation}
\begin{cases}
\displaystyle
    -\mu \Delta \mathbf{u} + \nabla p  =  \mathbf{f},
      \quad \mbox{in } \; \Omega,
    \\
    \displaystyle
    \nabla \cdot \mathbf{u}  =  0,
     \quad \text{ in } \Omega,
    \\
    \displaystyle
    \mathbf{u}  =  \mathbf{g},
    \quad \mbox{on } \; \partial \Omega,
\end{cases}
\label{Eqn_StokesBVP}
\end{equation}
where
$ \mu>0 $ is the constant fluid kinematic viscosity,
$ \mathbf{u} $ is the fluid velocity,
$ p $ is the fluid pressure,
$ \mathbf{f} $ is a given source term,
$\mathbf{g}$ is a Dirichlet boundary datum of the velocity satisfying the compatibility condition
$ \int_{\partial \Omega}\mathbf{g}\cdot\mathbf{n}d S=0 $,
and $\mathbf{n}$ is the unit outward normal on
the boundary $\partial \Omega$.
Notice that (\ref{Eqn_StokesBVP}) is singular and the pressure solution is not unique.

Discretization of Stokes problems leads to algebraic saddle point systems. The iterative solution of those large-scale
systems has continued to gain attention from researchers;
e.g., see \cite{BenziGolubLiesen-2005,Benzi2008,Boffi-2008,Elman-2014}
and references therein. A major type of iterative solution methods is to employ Krylov subspace methods,
such as the minimal residual method (MINRES) and the generalized minimal residual method (GMRES),
with block Schur complement preconditioning. A key for this type of methods is to design effective preconditioners,
e.g., see \cite{BenziGolubLiesen-2005,Benzi2008,MurphyGolubWathen_SISC_2000,SilvesterWathen_SINUM_1994,WathenSilvester_SINUM_1993}
and recent works \cite{Ainsworth_SINUM_2022,Bacuta-2019,DassiScacchi_CMAME_2020,MeierBanschFrank_CMAME_2022,Rhebergen_SISC_2022}.
In particular, Silvester and Wathen \cite{SilvesterWathen_SINUM_1994,WathenSilvester_SINUM_1993} proposed to replace
the Schur complement with the pressure mass matrix in block Schur complement preconditioners and showed that
these inexact preconditioners are very effective for solving nonsingular Stokes flow problems.

For singular Stokes problems (including (\ref{Eqn_StokesBVP})), it is known (e.g., see \cite{Elman-2014,Vorst_2003})
that Krylov subspace methods work well when the systems are consistent. In this case, singular preconditioners can be used
\cite[Section 8.3.4]{Elman-2014}. Recently, Huang and Wang \cite{HuangWang_arxiv_2024_b} pointed out that the algebraic system
resulting from the discretization of (\ref{Eqn_StokesBVP}) is generally singular and inconsistent
for nonzero Dirichlet boundary datum $\V{g}$. They suggested to modify the underlying scheme to make the system consistent and
showed that nonsingular block Schur complement preconditioners with the (2,2) block taken as the pressure mass matrix
are effective for MINRES and GMRES solution of the singular system.
Regularization, which makes the underlying system nonsingular, is also often used for solving singular Stokes problems.
For example, Gwynllyw and Phillips \cite{GWYNLLYW20061027} used a regularization term to enforce the mean-zero condition
for the pressure variable. 
Another common approach is to prescribe a value of the pressure at a location (pinning).
A convergence analysis of MINRES and GMRES with block Schur complement preconditioning
was recently given in \cite{HuangWang_CiCP_2015} for a pinning regularization. 
Lee et al. \cite{LeeWuXuZika_2007} considered subspace correction methods for nearly singular systems.

In this work, we consider a general regularization strategy for the singular Stokes problem (\ref{Eqn_StokesBVP})
and study the effect of the regularization on the accuracy of the numerical solution and how efficiently the regularized system
can be solved using MINRES and GMRES with block Schur complement preconditioning. More specifically, we consider
the lowest-order weak Galerkin (WG) finite element method for the discretization of (\ref{Eqn_StokesBVP}).
We choose the WG method here because it satisfies the inf-sup or LBB condition (for stability) without using stabilization terms and
has the optimal-order convergence. Moreover, its error in the velocity is independent of the error in the pressure
(pressure-robustness) and the error in the pressure is independent of the viscosity $\mu$ ($\mu$-semi-robustness)
(see \cite{WANG202290}). It is worth emphasizing that while we use the WG method here, the regularization strategy and analysis
presented in this work can also be applied to other discretization methods provided that they satisfy the inf-sup condition.

The general regularization strategy we consider in this work adds a rank-one regularization term, $-(\rho/\mu) \V{w} \V{w}^T$,
to the zero (2,2) block in the original singular system resulting from the WG discretization
of (\ref{Eqn_StokesBVP}). Here, $\rho > 0$ is a positive parameter and $\V{w}$ is an arbitrarily chosen
unit vector not orthogonal to the null space of the gradient operator (cf. (\ref{w_def})).
The regularization can be viewed as an attempt to project the pressure solution 
into a subspace different from the null space of the gradient operator. Moreover, it can be
implemented efficiently through vector-vector multiplication.

We shall show that the regularized system is nonsingular and that its numerical solution
maintains the optimal convergence order provided that the nonzero Dirichlet boundary
datum is numerically approximated with sufficient accuracy.
Moreover, we consider block diagonal and triangular Schur complement preconditioners.
The convergence of MINRES and GMRES with these preconditioners depends on $\gamma = \V{w}^T\V{1}$ and thus the choice of $\V{w}$, where $\V{1}$ (cf. (\ref{vector-1}))
is a basis vector of the null space of the gradient operator.
We shall show that when $\gamma$ is finite, the convergence of MINRES and GMRES
with the corresponding block Schur complement preconditioners is independent
of $\mu$ and $h$, where $h$ is the maximum element diameter of the underlying
quasi-uniform mesh. For small $\gamma$, the convergence factor of preconditioned MINRES
and GMRES is independent of $\mu$ and $h$
while the asymptotic error constant is inverse proportional to $\gamma^2 \rho h^{-d}$
(and thus the number of iterations required for convergence depends
logarithmically on $\gamma^2 \rho h^{-d}$).

It is instructional to point out that the pressure mean-zero enforcement \cite{GWYNLLYW20061027} and pinning \cite{HuangWang_CiCP_2015}
are special examples (with finite and small $\gamma$, respectively)
of the general regularization strategy considered in this work.
In \cite{GWYNLLYW20061027}, Gwynllyw and Phillips showed that the pressure
mean-zero enforcement does not affect the optimal convergence order of
the underlying spectral element approximations. They numerically studied
the condition number of the Uzawa operator and the preconditioned
conjugate gradient solution of the corresponding systems but did not provide
an analytical analysis of the iterative solution.
Huang and Wang \cite{HuangWang_CiCP_2015} considered the pinning regularization
and presented a convergence analysis of the preconditioned MINRES and GMRES for
the corresponding regularized system.

The rest of this paper is organized as follows.
In Section~\ref{SEC:formulation}, we introduce the WG discretization for Stokes flow
and discuss the singularity and consistency of the resulting linear system.
In Section~\ref{sec::unify}, we consider a general regularization strategy to make the system nonsingular and show that the regularized system has optimal-order convergence and how
the solution error is affected by the regularization.
In Section~\ref{sec:finite-gamma}, we study the inexact block diagonal and block triangular Schur complement preconditioning, as well as the convergence of MINRES and GMRES for the regularized system with finite $\gamma$ and $\rho$. The situation with small $\gamma$
and $\rho$ is considered in Section~\ref{sec:small-gamma}.
Section~\ref{SEC:numerical} presents numerical experiments in two and three dimensions, along with comparisons of different regularization choices, to verify the theoretical findings and demonstrate the effectiveness of the proposed preconditioning.
Conclusions are drawn in Section~\ref{SEC:conclusions}.


\section{Weak Galerkin discretization}
\label{SEC:formulation}

In this section we give a brief description of the lowest-order WG discretization for
Stokes flow problem (\ref{Eqn_StokesBVP}). 
We also discuss the singularity and consistency of the resulting linear algebraic system.

\subsection{WG discretization for (\ref{Eqn_StokesBVP})}

The weak formulation of (\ref{Eqn_StokesBVP}) is to find 
$ \mathbf{u} \in H^1(\Omega)^d$ and $ p \in L^2(\Omega) $
such that $ \mathbf{u}|_{\partial\Omega} = \mathbf{g} $ (in the weak sense) and
\begin{equation}
\begin{cases}
  \mu (\nabla\mathbf{u}, \nabla\mathbf{v}) - (p, \nabla\cdot\mathbf{v})
= (\mathbf{f}, \mathbf{v}),
   \quad \forall \mathbf{v} \in H^1_0(\Omega)^d,
  \\ 
  -(\nabla\cdot\mathbf{u}, q)
  =  0,
 \quad \forall q \in L^2(\Omega) .
\end{cases}
\label{VarForm}
\end{equation}
Assume that a quasi-uniform simplicial mesh $\mathcal{T}_h = \{K\}$  is given for $\Omega$. Denote the maximum element diameter by $h$.
Define the discrete weak function spaces as
\begin{align}
     \displaystyle
    \mathbf{V}_h
    & = \{ \mathbf{u}_h = \{ \mathbf{u}_h^\circ, \mathbf{u}_h^\partial \}: \;
      \mathbf{u}_h^\circ|_{K} \in P_0(K)^d, \;
      \mathbf{u}_h^\partial|_e \in P_0(e)^d, \;
      \forall K \in \mathcal{T}_h, \; e \in \partial K \},
    \\ 
    \displaystyle
    W_h &= \{ p_h \in L^2(\Omega): \; p_h|_{K} \in P_0(K), \; \forall K \in \mathcal{T}_h\},
\end{align}
where $P_0(K)$ and $P_0(e)$ denote the spaces of constant polynomials defined on element $K$ and facet $e$, respectively.
Note that $\mathbf{u}_h \in \mathbf{V}_h$ is approximated on both interiors and facets of mesh elements, 
whereas $p_h \in W_h$ is approximated only on element interiors.
It is useful to point out that the mass matrix on $W_h$ is diagonal, i.e.,
\begin{align}
    \label{mass-1}
    M_p^{\circ} = \text{diag}(|K_1|, ..., |K_N|),
\end{align}
where $K_j$, $j = 1, ..., N$ denote the elements of $\mathcal{T}_h$ and $|K_j|$ denotes the volume of $K_j$.

For a scalar function or a component of a vector-valued function, $u_h = (u_h^{\circ},u_h^{\partial})$,
the discrete weak gradient operator $\nabla_w: W_h \rightarrow RT_0(\mathcal{T}_h)$ is defined as
\begin{equation}
\label{weak-grad-1}
  (\nabla_w u_h, \mathbf{w})_K
  = (u^\partial_h, \mathbf{w} \cdot \mathbf{n})_{\partial K}
  - ( u^\circ_h , \nabla \cdot \mathbf{w})_K,
  \quad \forall \mathbf{w} \in RT_0(K),\quad \forall K \in \mathcal{T}_h ,
\end{equation}
where $RT_0(K)$ is the lowest-order Raviart-Thomas space, i.e.,
\[RT_0(K) = (P_0(K))^d + \V{x} \, P_0(K),\]
$\mathbf{n}$ is the unit outward normal to $\partial K$ and $(\cdot, \cdot)_K$
and $( \cdot, \cdot )_{\partial K}$ are the $L^2$ inner product on $K$ and $\partial K$, respectively.
The analytical expression of $\nabla_w u_h$ can be found by choosing $\V{w}$ in (\ref{weak-grad-1}) appropriately; e.g., see \cite{HuangWang_CiCP_2015}.
For a vector-valued function $\mathbf{u}_h$, $\nabla_w \mathbf{u}_h$ is treated as a matrix, where each row corresponds to the weak gradient
of a component.
The discrete weak divergence operator $\nabla_w \cdot: \mathbf{V}_h \to \mathcal{P}_0(\mathcal{T}_h)$ is defined
separately as 
\begin{equation}
   (\nabla_w \cdot \mathbf{u}, w )_{K}
  = ( \mathbf{u}^\partial , w \mathbf{n})_{ e }
  - ( \mathbf{u}^\circ , \nabla w)_{K},
  \quad
  \forall w \in P_0(K) .
  \label{wk_div1}
\end{equation}
Note that $\nabla_w \cdot \V{u}|_K \in P_0(K)$. Taking $w = 1$, we have
\begin{equation}
(\nabla_w \cdot \V{u}, 1)_{K} = \sum_{i=1}^{d+1} |e_{K,i}| \langle\V{u}\rangle_{e_{K,i}}^T \V{n}_{K,i} ,
\label{wk_div2}
\end{equation}
where $\langle\V{u}\rangle_{e_{K,i}}$ denotes the average of $\V{u}$ on facet $e_{K,i}$
and $|e_{K,i}|$ is the $(d-1)$-dimensional measure of $e_{K,i}$.

Having defined the discrete weak spaces, gradient, and divergence, we can introduce the WG approximation
of (\ref{VarForm}) as: finding $ \mathbf{u}_h \in \mathbf{V}_h $ and $ p_h \in W_h $
such that $ \mathbf{u}_h^\partial|_{\partial \Omega} = Q_h^{\partial}\mathbf{g} $ and
\begin{equation}
\begin{cases}
    \displaystyle
    \mu \sum_{K\in\mathcal{T}_h} (\nabla_w \mathbf{u}_h,\nabla_w \mathbf{v})_K 
    -\sum_{K \in \mathcal{T}_h}(p_h^{\circ}, \nabla_w\cdot\mathbf{v})_K
    =  \sum_{K \in \mathcal{T}_h} (\mathbf{f}, \mathbf{\Lambda}_h\mathbf{v})_K,
      \quad \forall \mathbf{v} \in \mathbf{V}_h^0,
    \\ 
    \displaystyle
    -\sum_{K \in \mathcal{T}_h}(\nabla_w\cdot\mathbf{u}_h,q^{\circ})_K
    =  0,
   \quad \forall q \in W_h,
\end{cases}
\label{scheme}
\end{equation}
where $Q_h^{\partial}$ is a $L^2$-projection operator onto $\V{V}_h$ restricted on each facet
and $\mathbf{\Lambda}_h: \V{V}_h \to RT_0(\mathcal{T}_h)$ is the lifting operator \cite{Mu.2020,WANG202290} defined as
\begin{equation}
    \displaystyle
    ( (\mathbf{\Lambda}_h\mathbf{v}) \cdot \mathbf{n}, w )_{e}
      = ( \mathbf{v}^{\partial}\cdot\mathbf{n}, w )_{e},
        \quad \forall w \in P_0(e),\; \forall \V{v} \in \V{V}_h, \; \forall e \subset \partial K .
\label{Eqn_DefLambdah}
\end{equation}
Notice that $\mathbf{\Lambda}_h\mathbf{v}$ depends on $\mathbf{v}^{\partial}$
but not on $\mathbf{v}^{\circ}$. 



Denote the WG approximation of $\V{u}$ on the interior and facets of any element $K$
by $\V{u}_{h,K}^{\circ}$ and $\V{u}_{h,K,i}^{\partial} \; (i = 1,...,d+1)$, respectively. We can express $\V{u}_h$ as
\begin{align*}
\displaystyle
\V{u}_{h}(\V{x}) & = \V{u}_{h,K}^{\circ} \varphi_K^{\circ}(\V{x}) + \sum_{i = 1}^{d+1} \V{u}_{h,K,i}^{\partial}
\varphi_{K,i}^{\partial}(\V{x})
\\
&
= \V{u}_{h,K}^{\circ} \varphi_K^{\circ}(\V{x})
 + \sum_{\substack{i = 1\\ e_{K,i} \notin \partial \Omega}}^{d+1} \V{u}_{h,K,i}^{\partial}
\varphi_{K,i}^{\partial}(\V{x})
+ \sum_{\substack{i = 1\\ e_{K,i} \in \partial \Omega}}^{d+1} \V{u}_{h,K,i}^{\partial}
\varphi_{K,i}^{\partial}(\V{x}),
\quad \forall \mathbf{x} \in K ,\; \forall K \in \mathcal{T}_h .
\end{align*}
Then, we can cast \eqref{scheme} into a matrix-vector form as
\begin{equation}
    \begin{bmatrix}
        \mu A & -(B^{\circ})^T \\
       -B^{\circ} & \mathbf{0}
    \end{bmatrix}
    \begin{bmatrix}
        \mathbf{u}_h \\
        \mathbf{p}_h
    \end{bmatrix}
    =
    \begin{bmatrix}
        \mathbf{b}_1 \\
        \mathbf{b}_2
    \end{bmatrix},
    \label{scheme_matrix}
\end{equation}
where the matrices $A$ and $B^{\circ}$ and vectors $\V{b}_1$ and $\V{b}_2$ are defined as
\begin{align}
    \mathbf{v}^T A \mathbf{u}_h  & =  \sum_{K\in\mathcal{T}_h} (\nabla_w \mathbf{u}_h,\nabla_w \mathbf{v})_K
    \label{A-1}
    \\
    & \displaystyle = \sum_{K\in\mathcal{T}_h} (\V{u}_{h,K}^{\circ}\nabla_w \varphi_K^{\circ},  \nabla_w \mathbf{v})_K
     + \sum_{K\in\mathcal{T}_h} \sum^{d+1}_{\substack{i = 1 \\e_{K,i} \notin \partial \Omega}}(\V{u}_{h,K,i}^{\partial}\nabla_w \varphi_{K,i}^{\partial},\nabla_w \mathbf{v})_K,
    \quad \forall \mathbf{u}_h, \mathbf{v} \in \mathbf{V}_h^0,
    \notag \\
\mathbf{q}^T B^{\circ} \mathbf{u}_h  & =  \sum_{K \in \mathcal{T}_h}  (\nabla_{w}\cdot\mathbf{u}_h,q^{\circ})_K
    = \sum_{K\in\mathcal{T}_h} \sum^{d+1}_{\substack{i = 1 \\e_{K,i} \notin \partial \Omega}}
    |e_{K,i}| q_{K}^{\circ}(\V{u}_{h,K,i}^{\partial})^T \V{n}_{K,i},
    \quad \forall \mathbf{u}_h \in \mathbf{V}_h^0, \quad \forall q \in W_h,
    \label{B-1} 
    \\
\mathbf{v}^T \V{b}_1 & = \sum_{K \in \mathcal{T}_h} (\mathbf{f}, \mathbf{\Lambda}_h\mathbf{v})_K
    - \mu \sum_{K\in\mathcal{T}_h} \sum_{\substack{i = 1\\ e_{K,i} \in \partial \Omega}}^{d+1}( (Q_{h}^{\partial}\V{g}) \nabla_w \varphi_{K,i}^{\partial},\nabla_w \mathbf{v})_K, \; \forall \mathbf{v} \in \mathbf{V}_h^0,
    \label{b1-1}
    \\
\mathbf{q}^T \V{b}_2 & = \sum_{K\in\mathcal{T}_h} \sum^{d+1}_{\substack{i = 1 \\e_{K,i} \in \partial \Omega}}
    |e_{K,i}| q_{K}^{\circ} (Q_{h}^{\partial}\V{g}|_{e_{K,i}} )^T \V{n}_{K,i},
    \quad \forall q \in W_h .
    \label{b2-1}
\end{align}
Here, $\V{v}$ (or $\V{v}_h$) denotes both the WG approximation of $\V{v}_h\in \V{V}_h$
and its vector representation, consisting of $(\V{v}_{h,K}^{\circ}, \V{v}_{h,K,i}^{\partial})$ for $i = 1, ..., d+1$ and $K \in \mathcal{T}_h$,
excluding those on $\partial \Omega$.
Similarly, for any $q_h \in W_h$, the vector formed by $q_{h,K}$ for all $K \in \mathcal{T}_h$ is represented by $\V{q}$ (or $\V{q}_h$).
Note that $ (B^{\circ})^T$ is the WG approximation to the gradient operator.

It is known (see, e.g., \cite{{WANG202290}}) that the scheme (\ref{scheme})
is pressure robust and has optimal-order convergence provided that the compatibility condition is satisfied (see the discussion in Section~\ref{sec:consistency}).
Moreover, it satisfies the inf-sup condition as stated in the following lemma.
\begin{lem}\label{inf_sup}
There exists a constant $ 0<\beta<1 $ independent of the parameter $\mu$ and the mesh $\mathcal{T}_h$ such that 
\begin{align}
   \sup_{\mathbf{v}\in \mathbf{V}_h^0, \quad \mathbf{v}^T A \mathbf{v} \neq 0}
  \frac{\V{v}^T(B^{\circ})\V{p}}
      {(\mathbf{v}^T A \mathbf{v})^{\frac{1}{2}}} 
    \geq \beta \|\V{p}\|, 
  \qquad \forall p \in W_h.
\end{align}
\end{lem}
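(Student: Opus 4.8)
The plan is to derive the discrete inf-sup bound from the continuous Stokes LBB inequality via a Fortin operator, which for the lowest-order WG pair can be taken to be the componentwise $L^2$-projection $Q_h=(Q_h^\circ,Q_h^\partial)$ onto $\mathbf{V}_h$ (element and facet averages). I would first reduce to mean-zero pressures: by (\ref{wk_div2}) the constant mode lies in the kernel of the functional $q\mapsto\sum_{K}(\nabla_w\cdot\mathbf{v},q^\circ)_K$ for every $\mathbf{v}\in\mathbf{V}_h^0$ (interior-facet terms cancel and boundary-facet terms vanish because $\mathbf{v}^\partial|_{\partial\Omega}=\V{0}$), so the numerator in the lemma depends on $p$ only through $p-\frac{1}{|\Omega|}\int_\Omega p\,dx$; it therefore suffices to prove the estimate for $p\in W_h$ with $\int_\Omega p\,dx=0$, reading $\|\V{p}\|$ as the quotient norm on $L^2(\Omega)/\mathbb{R}$ (equivalently, restricting to the mean-zero subspace of $W_h$, which is the pressure space relevant to the singular system). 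For such $p$ I would invoke the classical continuous LBB inequality: there is $\beta_0>0$ depending only on $\Omega$ with $\sup_{\mathbf{v}\in H^1_0(\Omega)^d\setminus\{\V{0}\}}(\nabla\cdot\mathbf{v},p)_\Omega/|\mathbf{v}|_{H^1(\Omega)}\ge\beta_0\,\|p\|_{L^2(\Omega)}$.

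Next I would establish three properties of $Q_h$. (i) $Q_h$ maps $H^1_0(\Omega)^d$ into $\mathbf{V}_h^0$, since the facet averages of $\mathbf{v}$ on $\partial\Omega$ vanish. (ii) A commuting relation with the divergence: taking $w=1$ in (\ref{wk_div1}), using (\ref{wk_div2}), the definition of $Q_h^\partial$, and the divergence theorem on each $K$,
\[
(\nabla_w\cdot(Q_h\mathbf{v}),1)_K=\sum_{i=1}^{d+1}|e_{K,i}|\,\langle\mathbf{v}\rangle_{e_{K,i}}^T\V{n}_{K,i}=\int_{\partial K}\mathbf{v}\cdot\V{n}\,dS=\int_K\nabla\cdot\mathbf{v}\,dx ,
\]
so $\nabla_w\cdot(Q_h\mathbf{v})|_K$ equals the $L^2(K)$-projection of $\nabla\cdot\mathbf{v}$ onto $P_0(K)$, and hence, $p^\circ$ being piecewise constant, $\sum_{K}(\nabla_w\cdot(Q_h\mathbf{v}),p^\circ)_K=(\nabla\cdot\mathbf{v},p)_\Omega$ for all $p\in W_h$. (iii) $A$-stability: integrating by parts in (\ref{weak-grad-1}) for each scalar component $v_j$ shows $(\nabla_w(Q_h v_j),\mathbf{w})_K=(\nabla v_j,\mathbf{w})_K$ for all $\mathbf{w}\in RT_0(K)$, i.e. $\nabla_w(Q_h v_j)$ is the $L^2(K)$-projection of $\nabla v_j$ onto $RT_0(K)$; summing the inequalities $\|\nabla_w(Q_h v_j)\|_K\le\|\nabla v_j\|_K$ over components and elements and using (\ref{A-1}) gives $(Q_h\mathbf{v})^T A\,(Q_h\mathbf{v})\le|\mathbf{v}|_{H^1(\Omega)}^2$.

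Finally I would chain these estimates. Fix mean-zero $p\neq0$ in $W_h$ and take $\mathbf{v}\in H^1_0(\Omega)^d$ with $(\nabla\cdot\mathbf{v},p)_\Omega>0$ (such $\mathbf{v}$ come arbitrarily close to realizing the continuous supremum). Then $(Q_h\mathbf{v})^T A\,(Q_h\mathbf{v})>0$: were it zero, the $RT_0$-projection identity in (iii) would force $\int_K\nabla v_j\,dx=0$ for every $K$ and $j$ (constants lie in $RT_0(K)$), hence $\int_K\nabla\cdot\mathbf{v}\,dx=0$ for every $K$ and, by (ii), $(\nabla\cdot\mathbf{v},p)_\Omega=0$, a contradiction. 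Thus $\mathbf{w}:=Q_h\mathbf{v}\in\mathbf{V}_h^0$ is an admissible test vector on the right-hand side of the lemma, and combining (ii), (iii) and (\ref{B-1}),
\[
\frac{\V{w}^T(B^\circ)^T\V{p}}{\big((Q_h\mathbf{v})^T A\,(Q_h\mathbf{v})\big)^{1/2}}=\frac{(\nabla\cdot\mathbf{v},p)_\Omega}{\big((Q_h\mathbf{v})^T A\,(Q_h\mathbf{v})\big)^{1/2}}\ge\frac{(\nabla\cdot\mathbf{v},p)_\Omega}{|\mathbf{v}|_{H^1(\Omega)}} .
\]
Taking the supremum over such $\mathbf{v}$ and applying the continuous LBB inequality yields the bound with $\beta=\beta_0$ (replaced by $\min(\beta_0,\tfrac{1}{2})$ if one wants $\beta<1$); all constants depend only on $\Omega$, so $\beta$ is independent of $\mu$ and of $\mathcal{T}_h$.

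The step I expect to be the main obstacle is the verification of properties (ii) and (iii), and in particular the integration-by-parts identity showing that $\nabla_w(Q_h v)$ is the $RT_0$-projection of $\nabla v$: this is what makes the plain $L^2$-projection a genuine Fortin operator with no mesh-dependent loss, and it relies on the precise definition (\ref{weak-grad-1}) of the weak gradient together with the fact that $RT_0(K)$ contains the constants. Everything else is a routine transfer from the continuous inf-sup condition; moreover, only element and facet averages of the test field enter (\ref{wk_div2}), so any interpolant reproducing those averages would work equally well in place of $Q_h$.
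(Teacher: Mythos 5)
Your proof is correct and is essentially the standard argument behind the result that the paper merely cites (Lemma 7 of Wang--Wang--Liu \cite{WANG202290}): the componentwise average projection $Q_h$ is a Fortin operator precisely because $\nabla_w(Q_h v)$ is the $L^2(K)$-projection of $\nabla v$ onto $RT_0(K)$ (giving $A$-stability with constant $1$) and $\nabla_w\cdot(Q_h\mathbf{v})$ is the $P_0(K)$-projection of $\nabla\cdot\mathbf{v}$ (giving the commuting identity), so the continuous LBB constant transfers with no mesh- or $\mu$-dependent loss. The only point worth flagging concerns the statement rather than your argument: as written the bound cannot hold for constant $p\neq 0$, since the numerator then vanishes identically by Lemma~\ref{lem:B0-1}; your reduction to mean-zero pressures, together with reading $\|\V{p}\|$ as $\|p\|_{L^2}=(\V{p}^T M_p^\circ\V{p})^{1/2}$ rather than the Euclidean vector norm, is the interpretation consistent with how $\beta$ is used later in the paper (e.g.\ the placement of the nonzero eigenvalues of $(M_p^\circ)^{-1}B^\circ A^{-1}(B^\circ)^T$ in $[\beta^2,d]$ in the appendix).
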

\begin{proof}
The proof can be found in \cite[Lemma 7]{WANG202290}.
\end{proof}


\subsection{Singularity and consistency of the linear algebraic system}
\label{sec:consistency}


\begin{lem}
\label{lem:B0-1}
The null space of $(B^{\circ})^T$ is given by
\[
\text{Null}((B^{\circ})^T) = \{ p_h \in W_h: \; p_{h,K} = C, \; \forall K \in \mathcal{T}_h, \; \text{ C is a constant} \} .
\]
\end{lem}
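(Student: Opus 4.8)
The plan is to characterize $\text{Null}((B^\circ)^T)$ directly from the definition of $B^\circ$ in (\ref{B-1}), exploiting the fact that $(B^\circ)^T$ is the WG approximation of the gradient operator. First I would note that $p_h \in \text{Null}((B^\circ)^T)$ if and only if $\V{q}^T B^\circ \mathbf{u}_h$... more precisely, $(B^\circ)^T \V{p} = \V{0}$ means $\V{v}^T (B^\circ)^T \V{p} = 0$ for all $\V{v} \in \mathbf{V}_h^0$, i.e., by (\ref{B-1}),
\[
\sum_{K\in\mathcal{T}_h} \sum^{d+1}_{\substack{i = 1 \\ e_{K,i} \notin \partial \Omega}}
|e_{K,i}|\, p_{K}^{\circ}\, (\V{v}_{h,K,i}^{\partial})^T \V{n}_{K,i} = 0,
\qquad \forall \mathbf{v} \in \mathbf{V}_h^0 .
\]
The ``$\supseteq$'' inclusion is the easy direction: if $p_{h,K} = C$ for all $K$, then for each interior facet $e$ shared by two elements $K$ and $K'$, the two contributions are $C |e| (\V{v}^\partial_e)^T \V{n}_{K} $ and $C |e| (\V{v}^\partial_e)^T \V{n}_{K'}$, and since $\V{n}_{K} = -\V{n}_{K'}$ on the shared facet, these cancel; facets on $\partial\Omega$ are excluded from the sum. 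Hence a constant pressure lies in the null space.

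For the ``$\subseteq$'' inclusion, the idea is to reorganize the sum over elements into a sum over interior facets. For a fixed interior facet $e$ shared by $K$ and $K'$, the corresponding degree of freedom $\V{v}^\partial_e$ appears in exactly two terms, contributing $|e| (p_K^\circ - p_{K'}^\circ)(\V{v}^\partial_e)^T \V{n}_{K,e}$ after accounting for the opposite normals. Choosing $\V{v}$ to be supported on a single interior facet with $\V{v}^\partial_e = \V{n}_{K,e}$ (and all other components zero), which is a legitimate element of $\mathbf{V}_h^0$, forces $|e|\,(p_K^\circ - p_{K'}^\circ) = 0$, hence $p_K^\circ = p_{K'}^\circ$. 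Since the mesh is connected (as $\Omega$ is connected), walking across interior facets from any element to any other shows all $p_K^\circ$ are equal, i.e., $p_h$ is a global constant.

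The main obstacle — really a bookkeeping point rather than a genuine difficulty — is handling the boundary facets correctly: because the sum in (\ref{B-1}) excludes facets on $\partial\Omega$, one must be sure that the connectivity argument only uses interior facets, which is fine since $\Omega$ is connected and its mesh is therefore connected through interior facets alone. One should also confirm that $|e| > 0$ and that $\V{n}_{K,e}$ is a nonzero vector so that the test function $\V{v}$ chosen above indeed yields a nonzero coefficient; both are immediate. A minor alternative is to phrase the argument at the continuous level via (\ref{wk_div2}): $\V{q}^T B^\circ \V{u}_h = \sum_K (\nabla_w \cdot \mathbf{u}_h, q^\circ)_K$, so $\V{p} \in \text{Null}((B^\circ)^T)$ iff $p_h$ is $L^2$-orthogonal to the range of $\nabla_w \cdot$ restricted to $\mathbf{V}_h^0$, and one shows that range is precisely the mean-zero subspace of $W_h$ — but the direct facet-by-facet computation above is cleaner and self-contained.
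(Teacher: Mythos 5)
Your proof is correct. The paper itself does not give an argument here but simply cites \cite[Lemma 2.1]{HuangWang_CiCP_2025}; your facet-by-facet computation --- testing with a velocity supported on a single interior facet with $\V{v}^\partial_e = \V{n}_{K,e}$ to force $p_K = p_{K'}$, then using connectivity of the mesh through interior facets --- is the standard self-contained version of that cited proof, and both inclusions, the treatment of boundary facets, and the connectivity step are handled properly.
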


\begin{proof}
The proof can be found in \cite[Lemma 2.1]{HuangWang_CiCP_2025}.
\end{proof}

The above lemma implies that $(B^{\circ})^T$ is deficient by one rank
and the linear system
(\ref{scheme_matrix}) is singular.

The system (\ref{scheme_matrix}) is also inconsistent for general nonzero $\V{g}$. To show this, we define
\begin{align}
    \label{vector-1}
\V{1}= \frac{1}{\sqrt{N}}\begin{bmatrix} 1 \\ \vdots \\ 1 \end{bmatrix},
\end{align}
where $N$ is the number of the elements of the mesh $\mathcal{T}_h$.
Lemma~\ref{lem:B0-1} implies that $\V{1}$ is a basis vector of the null space of $(B^{\circ})^T$.
Multiplying the second block equation of (\ref{scheme_matrix}) with $\V{1}^T$ (from left) and using (\ref{b2-1}), we get
\[
0 = \frac{\alpha_h}{\sqrt{N}} ,
\]
where
\begin{align}
    \alpha_h = \sqrt{N}\, \V{1}^T \V{b}_2 =  \sum_{e \in \partial \Omega} \int_{e} (Q_h^\partial \V{g}|_e)^T \V{n}_e d S 
    = \sum_{e \in \partial \Omega} |e|\Big( \V{Q}_h^{\partial}\V{g} |_e - \langle \V{g} \rangle_e \Big)^T {\V{n}_e},
\label{alpha-1}
\end{align}
where we have used the compatibility condition $\int_{\partial \Omega} \V{g}^T \V{n} d S = 0$.
Since $\V{Q}_h^{\partial}\V{g} |_e \neq \langle \V{g} \rangle_e $ generally for nonzero $\V{g}$,
$\alpha_h$ is not zero. As a consequence, (\ref{scheme_matrix}) is inconsistent in general.

On the other hand, $\V{Q}_h^{\partial}\V{g} |_e$ can be taken as the value of $\V{g}$ at the center of facet $e$
or computed by applying a Gaussian quadrature rule to $\langle \V{g} \rangle_e$. Thus, 
the approximation $\V{Q}_h^{\partial}\V{g} |_e$ to $\langle \V{g} \rangle_e $ can be made
to be second-order or higher, i.e., 
\begin{align}
\alpha_h = \mathcal{O}(h^q), \quad \text{with} \quad q \ge 2.
\label{alpha-3}
\end{align}

Singularity and inconsistency pose challenges in solving the linear system (\ref{scheme_matrix}).
Remedies for this include making the scheme consistent (while still being singular) or making it nonsingular.
For example, Huang and Wang \cite{HuangWang_arxiv_2024_b} suggested to modify $\V{b}_2$ to make the scheme consistent.
They showed that the modification does not affect the optimal convergence order of the numerical solution. They also showed that
the modified numerical scheme (which is singular but consistent) can be solved efficiently by MINRES and GMRES with block Schur
complement preconditioning.
Indeed, it is known (e.g., see \cite[Remark 6.12]{Elman-2014} and \cite[Section 10.2]{Vorst_2003})
that Krylov subspace methods, such as MINRES and GMRES, work well for consistent singular systems.

The other approach is using regularization, i.e., making the numerical scheme nonsingular.
For example, Gwynllyw and Phillips \cite{GWYNLLYW20061027} used a regularization term to enforce the mean-zero condition for the pressure variable.
Moreover, it is common, especially when direct solvers are used, to specify a value of the pressure at a suitable location
(this technique is often referred as pinning); see, e.g., \cite{Bochev_SIAM_2005,HuangWang_CiCP_2025}.

In this work we study a general strategy for regularization, which includes the pinning and mean-zero enforcement techniques as special cases.

\section{A general regularization strategy}
\label{sec::unify}

In this section we present a general strategy for the regularization of scheme (\ref{scheme_matrix}).
Since the regularization can affect the accuracy of the numerical solution, we also provide a convergence analysis
for the regularized scheme in this section.

Assume that a unit vector $\V{w} \in \mathbb{R}^N$ has been chosen to satisfy
\begin{align}
    \V{w}^T \V{w} = 1, \quad \V{w}^T \V{1} \neq 0,
    \label{w_def}
\end{align}
where $\V{1}$ is defined in \eqref{vector-1}. For notational convenience, denote $\gamma = \V{w}^T \V{1} \neq 0$.
The general regularization strategy is to add a regularization term, $-\frac{\rho}{\mu} \V{w}\V{w}^T$,
to the zero (2,2)-block of system \eqref{scheme_matrix}, leading to the new system
\begin{align}
     \begin{bmatrix}
        \mu A & -(B^{\circ})^T \\
       -B^{\circ} &  -\frac{\rho}{\mu} \V{w}\V{w}^T
    \end{bmatrix}
    \begin{bmatrix}
        {\mathbf{u}}_h \\
        {\mathbf{p}}_h
    \end{bmatrix}
    =
    \begin{bmatrix}
        \mathbf{b}_1 \\
        \mathbf{b}_2
    \end{bmatrix},
    \label{scheme_reg}
\end{align}
where $\rho$ is a positive constant and its choice will be discussed later.
Notice that the solution of this regularized system is different from that of the original system \eqref{scheme_matrix}
in general. Without confusion and for notational simplicity, we use the same notation for the solutions of both systems.
Moreover, although $\V{w} \V{w}^T$ is a full matrix, its multiplication with vectors and therefore, the multiplication of the coefficient matrix of (\ref{scheme_reg}) with vectors, can be implemented efficiently.

For the regularized system (\ref{scheme_reg}), we would like to know how
the accuracy of the numerical solution is affected by the regularization
and how efficiently the system can be solved iteratively using MINRES and GMRES with block Schur complement preconditioning.
For the former, we give a convergence analysis for (\ref{scheme_reg}) in this section
while the latter issue is studied in Sections~\ref{sec:finite-gamma} and \ref{sec:small-gamma}.

For the convergence analysis of (\ref{scheme_reg}), multiplying its second block equation with $\V{1}^T$ (from left) 
and noticing $\V{1}^T B^{\circ} = \V{0}^T$, we get
\begin{align}
    - \frac{\rho}{\mu} \V{w}^T \V{p}_h = \frac{\alpha_h}{\gamma \sqrt{N}} ,
    \label{projection-0}
\end{align}
where $\alpha_h$ is defined in (\ref{alpha-1}) and $\gamma = \V{w}^T\V{1} $.
From this and (\ref{alpha-3}), we can reasonably interpret
the regularization in (\ref{scheme_reg}) as an attempt to enforce $\V{w}^T \V{p}_h = 0$.
In this sense, we can regard the regularization as a projection technique that projects $\V{p}_h$
into the orthogonal complement of $\text{span}(\V{w})$.
Since $\V{w}$ is not orthogonal to $\V{1}$, the orthogonal complement of $\text{span}(\V{w})$
is different from $\text{span}(\V{1})$ (i.e., the null space of $(B^{\circ})^T$).

Using (\ref{projection-0}), we can rewrite (\ref{scheme_reg}) into
\begin{align}
     \begin{bmatrix}
        \mu A & -(B^{\circ})^T \\
       -B^{\circ} &  \V{0}
    \end{bmatrix}
    \begin{bmatrix}
        {\mathbf{u}}_h \\
        {\mathbf{p}}_h
    \end{bmatrix}
    =
    \begin{bmatrix}
        \mathbf{b}_1 \\
        \mathbf{b}_2 - \frac{\alpha_h}{\gamma \sqrt{N} } \V{w} 
    \end{bmatrix} .
    \label{scheme_reg-2}
\end{align}
This system is similar to the modified numerical scheme studied in \cite{HuangWang_arxiv_2024_b} to ensure the consistency of the linear system.
The following proposition can be established using a proof similar to that for \cite[Theorem 3.1]{HuangWang_arxiv_2024_b}.

\begin{pro}
\label{thm:reg_scheme_err}
Let $ \mathbf{u} \in H^{2}(\Omega)^d $ and $ p \in H^1(\Omega) $ be the solutions of Stokes problem (\ref{VarForm}) and 
$\mathbf{u}_h\in \mathbf{V}_h$ and $p_h\in W_h$ be the numerical solutions of the regularized scheme (\ref{scheme_reg}).
Assume $ \mathbf{f} \in L^2(\Omega)^d $.
Then,
\begin{align}
    & \| p - p_h \| \le C h \| \V{f} \| + C \mu \frac{|\alpha_h|}{|\gamma|},
    \label{thm:reg_scheme_err-1}
    \\
   &  \| \nabla \V{u} - \nabla_w \V{u}_h \| \le C h \|\V{u} \|_2 +  C \frac{|\alpha_h|}{|\gamma|},
   \label{thm:reg_scheme_err-3}
    \\
    & \|\mathbf{u} - \mathbf{u}_h\| = \|\mathbf{u} - \mathbf{u}_h^\circ\| \leq C h \|\mathbf{u}\|_{2}
    + C \frac{|\alpha_h|}{|\gamma|} ,
    \label{thm:reg_scheme_err-2}
    \\
      & 
   \displaystyle \|Q^{\circ}_h \V{u} - \V{u}^{\circ}_h\|  
  \le Ch^2 \| \V{u}\|_2 + C \frac{|\alpha_h|}{|\gamma|},
   \label{thm:reg_scheme_err-2-0}
\end{align}
where $C$ is a constant independent of $h$ and $\mu$.
\end{pro}

The above proposition shows how the error in the numerical solution is affected by the regularization through $\alpha$ and $\gamma$.
Recall from (\ref{alpha-1}) that $\alpha_h$ measures how closely the Dirichlet boundary datum $\V{g}$ is approximated by $Q_h^\partial \V{g}|_e$.
Thus, the effect of the regularization on the accuracy of the numerical solution is proportional to the error in approximating
the Dirichlet boundary datum.
Particularly, when $\gamma$ is a finite number, achieving the optimal convergence order of the numerical solution
requires the approximation to be $\mathcal{O}(h^2)$ at least.
A higher-order approximation to the boundary datum is needed when $\gamma$ is small.

\section{Convergence analysis of MINRES and GMRES with block Schur complement preconditioning with finite $\gamma$ and $\rho$}
\label{sec:finite-gamma}

Our next task is to study how efficiently the regularized scheme (\ref{scheme_reg}) can be solved by MINRES and GMRES
with block Schur complement preconditioning.
In this section we consider the situation where $\gamma$ and $\rho$ are finite
in the sense that they are constants (independent of $\mu$ and $h$ particularly).
In this case, the convergence of MINRES and GMRES with block Schur complement preconditioning can be shown to be
essentially independent of $\mu$ and $h$ (see Propositions~\ref{pro:MINRES_conv} and \ref{pro:GMRES_conv} below).

Two special examples of finite $\gamma$ have been studied by other researchers.
Gwynllyw and Phillips \cite{GWYNLLYW20061027} studied the enforcement of the mean-zero pressure condition in the 
spectral element approximation of the Stokes problem. In the current notation, this method corresponds to the choice
\begin{align}
    \label{mean-zero-1}
    \V{w} = \frac{\sqrt{N}}{\sqrt{\sum\limits_{K\in \mathcal{T}_h} |K|^2}}M_p^{\circ} \V{1}, \quad 
    \gamma = \frac{|\Omega|}{\sqrt{N} \sqrt{\sum\limits_{K\in \mathcal{T}_h} |K|^2}} = \mathcal{O}(1).
\end{align}

The other example is
\begin{align}
    \label{projection-1}
    \V{w} = \V{1}, \quad  \gamma = 1.
\end{align}
Recall that $\V{1}$ is a basis vector of the null space of $(B^{\circ})^T$. Thus, this choice attempts to project $\V{p}_h$
into the orthogonal complement of the null space of $(B^{\circ})^T$. A similar regularization was used by
Bochev and Lehoucq \cite{Bochev_SIAM_2005} for pure Neumann problems of Poisson's equation where 
$\V{1}$ is a basis vector of the null space of the coefficient matrix.

To start the analysis for general $\V{w}$, we rescale the unknown variables and rewrite \eqref{scheme_reg} as
\begin{equation}
    \begin{bmatrix}
        A & -(B^{\circ})^T \\
       -B^{\circ} &  - \rho \V{w} \V{w}^T
    \end{bmatrix}
    \begin{bmatrix}
        \mu \mathbf{u}_h \\
        \mathbf{p}_h
    \end{bmatrix}
    =
    \begin{bmatrix}
        \mathbf{b}_1 \\
        \mu \mathbf{b}_2
    \end{bmatrix},
    \quad \mathcal{A}   = \begin{bmatrix}
        A & -(B^{\circ})^T \\
       -B^{\circ} &  - \rho \V{w} \V{w}^T
    \end{bmatrix} .
    \label{scheme_matrix_2}
\end{equation}
The Schur complement for this system is $ S =  \rho\V{w} \V{w}^T +  B^{\circ} A^{-1} (B^{\circ})^T$.

\subsection{Schur complement and its eigenvalue estimation}
\label{sec:eigenvalue}

\begin{lem}
\label{lem:S-22}
The Schur complement $S$
is symmetric and positive semi-definite. Moreover, it satisfies
\begin{align}
    0 \le S \le d \;( \rho \V{w} \V{w}^T + M_p^{\circ}),
    \label{lem:S-bound-12}
\end{align}
where the sign ``$\le$'' between matrices is in the sense of negative semi-definite. 
\end{lem}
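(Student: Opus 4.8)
The plan is to treat the symmetry and the lower bound $0 \le S$ directly from the block structure of $S = \rho \V{w}\V{w}^T + B^{\circ} A^{-1} (B^{\circ})^T$, and to reduce the upper bound to a discrete divergence--gradient inequality. For the first part, I would first note that $A$ is symmetric positive definite on $\V{V}_h^0$: by (\ref{A-1}), $\V{v}^T A \V{v} = \sum_{K \in \mathcal{T}_h}\|\nabla_w \V{v}\|_K^2 \ge 0$, and this vanishes only when $\nabla_w \V{v} = 0$ on every element, which for $\V{v} \in \V{V}_h^0$ forces $\V{v} = 0$ by the discrete Poincar\'e inequality underlying Lemma~\ref{inf_sup}; hence $A^{-1}$ exists and is symmetric positive definite, so $B^{\circ} A^{-1} (B^{\circ})^T$ is symmetric and positive semi-definite. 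Since $\rho > 0$, the rank-one term $\rho \V{w}\V{w}^T$ is symmetric positive semi-definite as well, and therefore $S$ is symmetric positive semi-definite, i.e. $0 \le S$.

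For the upper bound, since $d \ge 1$ we have $\rho \V{w}\V{w}^T \le d\,\rho \V{w}\V{w}^T$ (in the matrix sense), so it suffices to prove $B^{\circ} A^{-1} (B^{\circ})^T \le d\, M_p^{\circ}$, equivalently $\V{q}^T B^{\circ} A^{-1} (B^{\circ})^T \V{q} \le d\,\|p_h\|^2$ for every $p_h \in W_h$ with coefficient vector $\V{q}$, where I used $\V{q}^T M_p^{\circ} \V{q} = \|p_h\|^2$ from (\ref{mass-1}). The key step is the discrete estimate
\[
\|\nabla_w \cdot \V{v}\|^2 := \sum_{K \in \mathcal{T}_h}\|\nabla_w \cdot \V{v}\|_K^2 \;\le\; d\, \V{v}^T A \V{v}, \qquad \forall \V{v} \in \V{V}_h^0,
\]
the mesh analogue of $\|\nabla\cdot\V{v}\|^2 \le d\,\|\nabla \V{v}\|^2$. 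To prove it, I would take $\V{w} = \V{e}_i$ (a constant vector, hence in $RT_0(K)$) in (\ref{weak-grad-1}) applied to the $i$-th component $v_i$ of $\V{v}$ to obtain $(\nabla_w v_i, \V{e}_i)_K = \int_{\partial K} v_i^{\partial} n_i\, dS$; summing over $i = 1, \dots, d$ and comparing with (\ref{wk_div2}), whose right-hand side equals $\int_{\partial K}\V{v}^{\partial}\cdot\V{n}\,dS$, gives $|K|\,(\nabla_w\cdot\V{v})|_K = \sum_{i=1}^d (\nabla_w v_i, \V{e}_i)_K$. Two applications of Cauchy--Schwarz then finish it: first over $i$, and then the $L^2(K)$ pairing against $1$ to get $|(\nabla_w v_i, \V{e}_i)_K|^2 \le |K|\,\|\nabla_w v_i\|_K^2$, whence $\|\nabla_w\cdot\V{v}\|_K^2 = |K|\,|(\nabla_w\cdot\V{v})|_K|^2 \le d\sum_{i=1}^d \|\nabla_w v_i\|_K^2 = d\,\|\nabla_w \V{v}\|_K^2$; summing over $K$ yields the estimate.

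Finally I would combine these facts. By (\ref{B-1}) and the elementwise constancy of both $p_h$ and $\nabla_w\cdot\V{v}$, $\V{q}^T B^{\circ} \V{v} = \sum_{K}(\nabla_w\cdot\V{v}, p_h^{\circ})_K = (p_h, \nabla_w\cdot\V{v})$, so Cauchy--Schwarz and the discrete estimate give $|\V{q}^T B^{\circ}\V{v}| \le \|p_h\|\,\|\nabla_w\cdot\V{v}\| \le \sqrt{d}\,\|p_h\|\,(\V{v}^T A \V{v})^{1/2}$ for all $\V{v} \in \V{V}_h^0$. Taking $\V{v} = \V{y} := A^{-1}(B^{\circ})^T\V{q}$, so that $A\V{y} = (B^{\circ})^T\V{q}$, we get $\V{q}^T B^{\circ} A^{-1}(B^{\circ})^T\V{q} = \V{y}^T A \V{y} = \V{q}^T B^{\circ}\V{y} \le \sqrt{d}\,\|p_h\|\,(\V{y}^T A \V{y})^{1/2}$, hence $\V{y}^T A \V{y} \le d\,\|p_h\|^2$, i.e. $\V{q}^T B^{\circ} A^{-1}(B^{\circ})^T\V{q} \le d\,\V{q}^T M_p^{\circ}\V{q}$. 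Adding $\rho\V{w}\V{w}^T \le d\,\rho\V{w}\V{w}^T$ gives $S \le d(\rho\V{w}\V{w}^T + M_p^{\circ})$, as claimed. I expect the main obstacle to be the discrete divergence--gradient inequality: obtaining the clean constant $d$ with no $h$-dependent factor relies on correctly matching the weak-divergence and weak-gradient definitions through the test functions $\V{w} = \V{e}_i$ and on the fact that $\nabla_w\cdot\V{v}$ is piecewise constant, so that the Cauchy--Schwarz step against $1$ on each $K$ loses nothing.
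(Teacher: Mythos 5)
Your proof is correct and takes essentially the same route as the paper: both reduce the upper bound to the elementwise inequality $\|\nabla_w\cdot\V{v}\|_K^2 \le d\,\|\nabla_w\V{v}\|_K^2$ and then transfer it to $B^{\circ}A^{-1}(B^{\circ})^T \le d\,M_p^{\circ}$ (the paper via a Rayleigh-quotient/sup duality through $(M_p^{\circ})^{-1}$, you via Cauchy--Schwarz with the test vector $A^{-1}(B^{\circ})^T\V{q}$ --- equivalent arguments). The only substantive difference is that you actually derive the divergence--gradient inequality from the definitions (\ref{weak-grad-1}) and (\ref{wk_div1}), a step the paper merely asserts ``can be verified directly,'' and your derivation of it is sound.
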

\begin{proof}
It is obvious that $S$ is symmetric and positive semi-definite and thus $0 \le S$.
From the definitions of $B^{\circ}$ \eqref{B-1} and $M_p^{\circ}$ (\ref{mass-1})
and the fact $\nabla_w \cdot \V{u} \in P_0(\mathcal{T}_h)$, we have
\begin{align*}
\V{u}^{T}  (B^{\circ})^T (M_p^{\circ})^{-1} B^{\circ} \V{u} = \sum_{K \in \mathcal{T}_h}(\nabla_w \cdot \V{u}, \nabla_w \cdot \V{u})_K .
\end{align*}
Moreover, it can be verified directly that
\begin{align*}
   \sum_{K \in \mathcal{T}_h}(\nabla_w \cdot \mathbf{u_h},\nabla_w \cdot \mathbf{u_h})_K \le
d \sum_{K \in \mathcal{T}_h}(\nabla_w \mathbf{u}_h,\nabla_w \mathbf{u}_h )_K .
\end{align*}
Then, since both $A$ (cf. \eqref{scheme_matrix}) and $M_p^{\circ}$ are symmetric and positive definite, we have
\begin{align*}
    \sup_{\mathbf{p} \neq 0} \frac{\V{p}^{T}( B^{\circ}{A}^{-1}(B^{\circ})^T) \V{p}}{\V{p}^T M_p^{\circ} \V{p}} 
    & = \sup_{\mathbf{p} \neq 0}  \frac{\V{p}^{T} (M_p^{\circ})^{-\frac12} ( B^{\circ}{A}^{-1}(B^{\circ})^T)
    (M_p^{\circ})^{-\frac12} \V{p}}{\V{p}^T \V{p}}
    \notag
    \\
    & \le \sup_{\mathbf{u} \neq 0} \frac{\V{u}^{T}  (B^{\circ})^T (M_p^{\circ})^{-1} B^{\circ}
    \V{u}}{\V{u}^T A \V{u}}
    \notag \\
    & = \sup_{\mathbf{u} \neq 0} \frac{\sum_{K \in \mathcal{T}_h}(\nabla_w \cdot \mathbf{u_h},\nabla_w \cdot \mathbf{u_h})_K}{\sum_{K \in \mathcal{T}_h}(\nabla_w \mathbf{u}_h,\nabla_w \mathbf{u}_h )_K} \le  d,
\end{align*}
or
\begin{align}
    \sup_{\mathbf{p} \neq 0} \frac{\V{p}^{T}( B^{\circ}{A}^{-1}(B^{\circ})^T) \V{p}}{\V{p}^T M_p^{\circ} \V{p}} 
    \le  d. 
    \label{AB-1}
\end{align}
Thus, we have
\begin{align*}
    \V{p}^T S \V{p} & =  \V{p}^T \rho \V{w} \V{w}^T \V{p} + \V{p}^T  B^{\circ}{A}^{-1}(B^{\circ})^T \V{p}
    \leq \V{p}^T d \;( \rho \V{w} \V{w}^T  + M_p^{\circ}) \V{p},
\end{align*}
which leads to the right inequality of (\ref{lem:S-bound-12}).
\end{proof}

Motivated by Lemma~\ref{lem:S-22}, we take an approximation of the Schur complement $S$ as
\begin{align}
\label{hatS-1}
    \hat{S} =  \rho\V{w} \V{w}^T  + M_p^{\circ} .
\end{align}
Using the Sherman-Morrison-Woodbury formula, we can obtain the expression of the inverse of $\hat{S}$ as
\begin{align}
    \hat{S}^{-1} =  (M_p^{\circ})^{-1} - \frac{\rho (M_p^{\circ})^{-1} \V{w} \V{w}^T (M_p^{\circ})^{-1}}{1+ \rho \V{w}^T (M_p^{\circ})^{-1} \V{w}} .
    \label{hatS_inv}
\end{align}
Notice that $M_p^{\circ}$ is diagonal so its inverse is trivial to compute.
Moreover, the multiplication of $\hat{S}^{-1}$ with vectors can be implemented efficiently.

Now we establish bounds for the eigenvalues of $\hat{S}^{-1} S$. Those bounds will be needed in the convergence analysis
for MINRES and GMRES.

\begin{lem}
\label{eigen_bound2}
The eigenvalues of $\hat{S}^{-1} S$ are bounded by
\begin{align}
\label{eigen_bound2-eq}
C_1 + \mathcal{O}(h^{d}) \le \lambda_i(\hat{S}^{-1} S) \le d, \quad i = 1, ..., N 
\end{align}
where
\begin{align}
    \label{C1-1}
    C_1 = \displaystyle  \beta^2 \; \frac{\lambda_{\min}(M_p^{\circ})}{\lambda_{\max}(M_p^{\circ})} \; \gamma^2 .
\end{align}
\end{lem}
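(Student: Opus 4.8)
The plan is to estimate the eigenvalues of $\hat{S}^{-1} S$ by working with the generalized eigenvalue problem $S \V{p} = \lambda \hat{S} \V{p}$, equivalently by bounding the Rayleigh quotient $\V{p}^T S \V{p} / \V{p}^T \hat{S} \V{p}$. The upper bound $\lambda_i \le d$ is immediate from Lemma~\ref{lem:S-22}: that lemma gives $S \le d\,(\rho \V{w}\V{w}^T + M_p^{\circ}) = d\,\hat{S}$ in the negative-semidefinite ordering, so $\V{p}^T S \V{p} \le d\, \V{p}^T \hat{S}\V{p}$ for all $\V{p}$, which is exactly the upper bound. The real work is the lower bound.

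For the lower bound, I would split the contributions from the two pieces of $S = \rho \V{w}\V{w}^T + B^{\circ} A^{-1} (B^{\circ})^T$ and compare them against $\hat{S} = \rho \V{w}\V{w}^T + M_p^{\circ}$. The key fact is the inf-sup condition (Lemma~\ref{inf_sup}), which, after the usual algebraic manipulation, says $\V{p}^T B^{\circ} A^{-1} (B^{\circ})^T \V{p} \ge \beta^2 \|\V{p}\|^2$ for all $\V{p}$ orthogonal to $\mathrm{Null}((B^{\circ})^T) = \mathrm{span}(\V{1})$ — but note it gives a bound in the Euclidean norm, not the $M_p^{\circ}$-weighted norm. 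To handle a general $\V{p}$, decompose $\V{p} = \V{p}_0 + c\,\V{1}$ with $\V{p}_0 \perp \V{1}$. The gradient term controls $\V{p}_0$: $\V{p}^T B^{\circ} A^{-1}(B^{\circ})^T \V{p} = \V{p}_0^T B^{\circ} A^{-1}(B^{\circ})^T \V{p}_0 \ge \beta^2 \|\V{p}_0\|^2 \ge \beta^2 \lambda_{\min}(M_p^{\circ})^{-1} \cdot (\text{something})$, or more cleanly $\ge \frac{\beta^2}{\lambda_{\max}(M_p^{\circ})} \V{p}_0^T M_p^{\circ} \V{p}_0$. The regularization term controls the component of $\V{p}$ along $\V{1}$, because $\V{w}^T \V{p} = \V{w}^T \V{p}_0 + c\gamma$, and one needs $(\V{w}^T\V{p})^2$ to be bounded below by a multiple of $c^2$ up to a harmless cross-term; since $\gamma \ne 0$ this works, with the quantitative constant involving $\gamma^2$. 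Combining the two, and absorbing the cross-terms, should yield $\V{p}^T S \V{p} \ge \big(C_1 + \mathcal{O}(h^d)\big)\,\V{p}^T \hat{S}\V{p}$ with $C_1 = \beta^2 \gamma^2 \lambda_{\min}(M_p^{\circ})/\lambda_{\max}(M_p^{\circ})$; the $\mathcal{O}(h^d)$ correction will come from the fact that $\lambda_{\min}(M_p^{\circ})$ and $\lambda_{\max}(M_p^{\circ})$ scale like $h^d$ on a quasi-uniform mesh, so their ratio is $\mathcal{O}(1)$ but the individual terms entering the cross-term estimates carry powers of $h^d$, and $\rho$ is treated as $\mathcal{O}(1)$ here.

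The main obstacle is managing the interaction between the two subspaces $\mathrm{span}(\V{1})$ and $\mathrm{span}(\V{w})$ cleanly. The inf-sup bound only lives on $\V{1}^\perp$, while the rank-one regularization only sees $\V{w}$, and these are not orthogonal; so one must carefully show that the sum of a quadratic form that is coercive on $\V{1}^\perp$ and a rank-one form that is coercive along $\V{w}$ is coercive on all of $\mathbb{R}^N$, with an explicit constant. The cleanest route is probably to write $c\gamma = \V{w}^T\V{p} - \V{w}^T\V{p}_0$, use $(a-b)^2 \ge \tfrac12 a^2 - b^2$ together with $|\V{w}^T\V{p}_0| \le \|\V{p}_0\|$, and then choose how to distribute the resulting $\|\V{p}_0\|^2$ term against the inf-sup term; a careful bookkeeping of which constant $\beta^2$ gets partially consumed is what produces the stated $C_1$ and the lower-order $h$-dependent remainder. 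A secondary technical point is to confirm that the $\rho \V{w}\V{w}^T$ terms in the numerator and denominator of the Rayleigh quotient genuinely cancel in the right proportion — they appear identically in $S$ and $\hat{S}$, so the comparison reduces to bounding $B^{\circ}A^{-1}(B^{\circ})^T$ against $M_p^{\circ}$ plus a contribution isolating the $\V{1}$-direction via $\V{w}$.
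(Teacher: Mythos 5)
Your strategy is essentially the paper's: the upper bound $d$ falls out of Lemma~\ref{lem:S-22} exactly as you say, and the lower bound is a two-subspace coercivity argument for the Rayleigh quotient $\V{p}^T S\V{p}/\V{p}^T\hat{S}\V{p}$, combining inf-sup coercivity of $B^{\circ}A^{-1}(B^{\circ})^T$ on the complement of $\V{1}$ with the rank-one term along $\V{w}$. The paper executes this by expanding $\V{p}$ and $\V{w}$ in the eigenbasis of $B^{\circ}A^{-1}(B^{\circ})^T$, decomposing the coefficient vector relative to that of $\V{w}$, and reducing everything to an explicit one-variable minimization of a rational function $f(x)$ solved by calculus; your decomposition $\V{p}=\V{p}_0+c\,\V{1}$ with the constraint $c\gamma=\V{w}^T\V{p}-\V{w}^T\V{p}_0$ is a legitimate reparametrization of the same optimization. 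It does reproduce the sharp constant in the extreme case $\V{w}^T\V{p}=0$: there $c^2\le(1-\gamma^2)\|\V{p}_0\|^2/\gamma^2$, so the quotient is at least $\beta^2\lambda_{\min}(M_p^{\circ})\|\V{p}_0\|^2/\big(\lambda_{\max}(M_p^{\circ})(c^2+\|\V{p}_0\|^2)\big)\ge C_1$. (A minor slip: the coercivity on $\V{1}^\perp$ is $\V{p}_0^TB^{\circ}A^{-1}(B^{\circ})^T\V{p}_0\ge\beta^2\lambda_{\min}(M_p^{\circ})\|\V{p}_0\|^2$, cf.\ \eqref{eigen_bound3}; your stated bounds drop the factor $\lambda_{\min}(M_p^{\circ})$, which matters since everything here scales like $h^d$.)

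The one step that would fail as described is the handling of the cross term. Splitting $(c\gamma)^2=(\V{w}^T\V{p}-\V{w}^T\V{p}_0)^2$ via $(a-b)^2\ge\tfrac12a^2-b^2$, or letting the resulting $\|\V{p}_0\|^2$ term eat into the inf-sup constant, yields a lower bound of the form $\beta^2\gamma^2\lambda_{\min}(M_p^{\circ})/\big(\lambda_{\max}(M_p^{\circ})(2-\gamma^2)\big)$ or worse — strictly smaller than $C_1$ whenever $\gamma^2<1$, so it proves a weaker lemma than \eqref{eigen_bound2-eq}. Any $h$-independent Young splitting of the cross term $2|\V{w}^T\V{p}|\,(1-\gamma^2)^{1/2}\|\V{p}_0\|$ has the same defect: sending its $\|\V{p}_0\|^2$ share to the $M_p^{\circ}$ part of the denominator costs an $\mathcal{O}(1)$ factor there. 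Recovering the sharp $C_1$ requires exploiting the scale separation $\rho=\mathcal{O}(1)$ versus $\lambda_{\max}(M_p^{\circ})=\mathcal{O}(h^d)$: the worst-case ratio $|\V{w}^T\V{p}|/\|\V{p}_0\|$ turns out to be $\mathcal{O}(h^d)$ (not $0$), at which point the cross term contributes only an $\mathcal{O}(h^d)$ relative perturbation — this is exactly what the paper's minimization produces, with minimizer $x_2=\mathcal{O}(h^{-d})$ in its parametrization and minimum value $\eta/\sigma+\mathcal{O}(h^d)=C_1+\mathcal{O}(h^d)$. So the decomposition and the constant are right, but the ``careful bookkeeping'' you defer is precisely the nontrivial part, and it must be done with $h$-dependent weights rather than fixed splitting constants.
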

\begin{proof}
The proof is given in Appendix~\ref{sec:eigen_bound2-proof}.
\end{proof}

Since the mesh is assumed to be quasi-uniform, we have ${\lambda_{\min}(M_p^{\circ})}/{\lambda_{\max}(M_p^{\circ})} = \mathcal{O}(1)$
and $C_1 = \mathcal{O}(1)$. 
Then, Lemma~\ref{eigen_bound2} implies that the eigenvalues of $\hat{S}^{-1} S$
are bounded below and above essentially by positive constants.
Since $\hat{S}$ is symmetric and positive definite,
it follows that $S$, and therefore the coefficient matrix $\mathcal{A}$ of the regularized system (\ref{scheme_matrix_2}), are
non-singular.


\subsection{Convergence of MINRES with block diagonal Schur complement preconditioning}
\label{sec:mires}

Now, we study the iterative solution of the regularized system \eqref{scheme_matrix_2} with MINRES and the block diagonal Schur complement preconditioner
\begin{align}
    \mathcal{P}_d = 
    \begin{bmatrix}
        A & 0 \\
        0 & \hat{S}
    \end{bmatrix} , \quad \hat{S} =  \rho\V{w} \V{w}^T  + M_p^{\circ}.
    \label{PrecondP-diag}
\end{align}
Since $\mathcal{P}_d$ is symmetric and positive definite,
the preconditioned coefficient matrix $\mathcal{P}_d^{-1} \mathcal{A}$ is similar to the symmetric matrix
$\mathcal{P}_d^{-1/2} \mathcal{A} \mathcal{P}_d^{-1/2}$, which can be expressed as
\begin{align}
    \mathcal{P}_d^{-\frac{1}{2}} \mathcal{A} \mathcal{P}_d^{-\frac{1}{2}}&= 
    \begin{bmatrix}
        A^{\frac{1}{2}} & 0 \\
        0 &\hat{S}^{\frac{1}{2}}
    \end{bmatrix}^{-1}
    \begin{bmatrix}
        A & -(B^{\circ})^T \\
       -B^{\circ} & -\rho \V{w}\V{w}^T
       \end{bmatrix}
    \begin{bmatrix}
        A^{\frac{1}{2}} & 0 \\
        0 &\hat{S}^{\frac{1}{2}}
    \end{bmatrix}^{-1} \notag
    \\ 
    & = \begin{bmatrix}
        \mathcal{I} & -A^{-\frac{1}{2}} (B^{\circ})^T\hat{S}^{-\frac{1}{2}} \\
        -\hat{S}^{-\frac{1}{2}} B^{\circ}A^{-\frac{1}{2}} &  -\rho \hat{S}^{-\frac{1}{2}} \V{w}\V{w}^T  \hat{S}^{-\frac{1}{2}}
    \end{bmatrix}.
    \label{diag-precond-system}
\end{align}

\begin{lem}
\label{lem:eigen_bound_diag}
The eigenvalues of $ \mathcal{P}_d^{-1} \mathcal{A} $ lie in 
\begin{align}
  &   
      \Bigg[ -\frac{d}{\sqrt{C_1}  } + \mathcal{O}(h^d), -\frac{2 C_1 }{1+\sqrt{1+4d}} + \mathcal{O}(h^d) \Bigg] 
      \bigcup
\Bigg[\sqrt{C_1} + \mathcal{O}(h^d), 
\;  \frac{1}{2}(1+\sqrt{1+4d})\Bigg],
 \label{lem:eigen_bound_diag-1}
\end{align}
where $C_1$ is defined in (\ref{C1-1}).
\end{lem}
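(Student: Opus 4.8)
The plan is to analyze the eigenvalues of the symmetric matrix $\mathcal{P}_d^{-1/2} \mathcal{A} \mathcal{P}_d^{-1/2}$ given in \eqref{diag-precond-system}, which are the same as those of $\mathcal{P}_d^{-1}\mathcal{A}$. Write an eigenpair as $(\lambda, (\V{x},\V{y}))$ with $\V{x}$ corresponding to the velocity block and $\V{y}$ to the pressure block. The eigenvalue equation gives the pair of relations
\begin{align*}
\V{x} - A^{-1/2}(B^\circ)^T \hat{S}^{-1/2}\V{y} &= \lambda \V{x},\\
-\hat{S}^{-1/2} B^\circ A^{-1/2}\V{x} - \rho\, \hat{S}^{-1/2}\V{w}\V{w}^T\hat{S}^{-1/2}\V{y} &= \lambda \V{y}.
\end{align*}
I would first dispose of the case $\lambda = 1$ (possible only when $\hat{S}^{-1/2}(B^\circ)^T\V{x}$-type terms vanish), then assume $\lambda \neq 1$, solve the first equation for $\V{x} = \frac{1}{1-\lambda} A^{-1/2}(B^\circ)^T\hat{S}^{-1/2}\V{y}$, and substitute into the second. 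This yields a single equation in $\V{y}$ of the form
\[
\frac{1}{1-\lambda}\, \hat{S}^{-1/2}\big(B^\circ A^{-1}(B^\circ)^T\big)\hat{S}^{-1/2}\V{y} + \rho\, \hat{S}^{-1/2}\V{w}\V{w}^T\hat{S}^{-1/2}\V{y} = -\lambda \V{y},
\]
i.e. $\hat{S}^{-1/2} S \hat{S}^{-1/2}\V{y} + \lambda(1-\lambda)\cdots$ — more precisely, recalling $S = \rho\V{w}\V{w}^T + B^\circ A^{-1}(B^\circ)^T$, one multiplies through by $(1-\lambda)$ and groups to get that $\V{y}$ is an eigenvector of $\hat{S}^{-1/2} S \hat{S}^{-1/2}$ with eigenvalue $\sigma$ satisfying a scalar quadratic $\lambda^2 - \lambda - \sigma = 0$ for the components of $\V{y}$ outside the kernel, and a separate simpler relation for $\V{y}$ in the kernel of $B^\circ A^{-1}(B^\circ)^T$ interacting only with the $\rho\V{w}\V{w}^T$ term. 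The key observation is that $\hat{S}^{-1/2}S\hat{S}^{-1/2}$ is similar to $\hat{S}^{-1}S$, whose eigenvalues are controlled by Lemma~\ref{eigen_bound2}: $\sigma \in [C_1 + \mathcal{O}(h^d),\, d]$.

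Next I would solve the quadratic $\lambda^2 - \lambda - \sigma = 0$, giving $\lambda = \tfrac12(1 \pm \sqrt{1+4\sigma})$. The positive root is increasing in $\sigma$, so it ranges over $[\tfrac12(1+\sqrt{1+4C_1}) + \mathcal{O}(h^d),\, \tfrac12(1+\sqrt{1+4d})]$; using $\sqrt{1+4C_1} \ge 2\sqrt{C_1}$ (equivalently $1 \ge 0$, or a cleaner bound like $\tfrac12(1+\sqrt{1+4C_1}) \ge \sqrt{C_1}$ since $1 + \sqrt{1+4C_1} \ge 2\sqrt{C_1} \Leftrightarrow \sqrt{1+4C_1} \ge 2\sqrt{C_1}-1$, true because squaring the RHS when it's positive gives $4C_1 - 4\sqrt{C_1} + 1 \le 1 + 4C_1$) gives the stated lower bound $\sqrt{C_1} + \mathcal{O}(h^d)$ on the positive eigenvalues. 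The negative root is $\lambda = \tfrac12(1-\sqrt{1+4\sigma})$, which is decreasing in $\sigma$; its value at $\sigma = d$ is $\tfrac12(1-\sqrt{1+4d}) = -\tfrac{2d}{1+\sqrt{1+4d}}$ (rationalizing), giving the left endpoint of the negative interval, and at $\sigma = C_1 + \mathcal{O}(h^d)$ it is $-\tfrac12(\sqrt{1+4C_1}-1) = -\tfrac{2C_1}{1+\sqrt{1+4C_1}} + \mathcal{O}(h^d)$; I would then note $-\tfrac{2d}{1+\sqrt{1+4d}} \ge -\tfrac{d}{\sqrt{C_1}}$ provided $C_1 \le$ something — actually one wants the weaker lower bound $-\tfrac{d}{\sqrt{C_1}}$, and since $1+\sqrt{1+4d} \ge 2\sqrt{d} \ge 2\sqrt{C_1}$ (because $C_1 \le d$, which follows from Lemma~\ref{eigen_bound2}), we get $\tfrac{2d}{1+\sqrt{1+4d}} \le \tfrac{2d}{2\sqrt{C_1}} = \tfrac{d}{\sqrt{C_1}}$, yielding the stated left endpoint. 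This reconciles all four endpoints in \eqref{lem:eigen_bound_diag-1}.

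The remaining technical point is the $\lambda = 1$ case and the interaction of the rank-one term with $\text{Null}(B^\circ A^{-1}(B^\circ)^T) = \text{span}(\V{1})$ (by Lemma~\ref{lem:B0-1}): I would check that any eigenvector with $\V{y}$ in that kernel reduces to an eigenvalue problem for $\rho\hat{S}^{-1/2}\V{w}\V{w}^T\hat{S}^{-1/2}$ restricted appropriately, producing eigenvalues that still lie within the claimed intervals (this uses $\gamma = \V{w}^T\V{1}\neq 0$ and the structure of $\hat{S}$), and that $\lambda = 1$ occurs only with eigenvectors $(\V{x},\V{0})$ where $B^\circ A^{-1/2}\cdot$ is orthogonal complications vanish, i.e. $\lambda=1$ forces $\hat S^{-1/2}(B^\circ)^T$ applied appropriately to be zero — harmless since $1$ lies in the positive interval $[\sqrt{C_1}+\mathcal{O}(h^d), \tfrac12(1+\sqrt{1+4d})]$ as long as $C_1 \le 1 \le \tfrac12(1+\sqrt{1+4d})$, the latter always holding. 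I expect the main obstacle to be the bookkeeping around which components of the eigenproblem decouple — specifically cleanly separating the generic case (where $\V{y}$ has a nonzero component on which $B^\circ A^{-1}(B^\circ)^T$ is invertible, giving the quadratic) from the degenerate directions, and absorbing the rank-one perturbation's effect into the $\mathcal{O}(h^d)$ terms consistently with Lemma~\ref{eigen_bound2}; the algebra of the quadratic and the endpoint comparisons is routine once the reduction is set up correctly.
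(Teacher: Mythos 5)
Your overall strategy (eliminate $\lambda=1$, solve the first block equation for the velocity, substitute into the second, reduce to a scalar quadratic whose coefficients are controlled by Lemma~\ref{eigen_bound2}) is the paper's strategy, but the key algebraic step is wrong, and the error changes the bounds. After substituting $\V{x}=\frac{1}{1-\lambda}A^{-1/2}(B^{\circ})^T\hat S^{-1/2}\V{y}$ and multiplying by $(1-\lambda)$, the rank-one term does \emph{not} get absorbed into $\hat S^{-1/2}S\hat S^{-1/2}$: you are left with
\[
\hat S^{-1/2}S\hat S^{-1/2}\V{y}\;-\;\lambda\,\rho\,\hat S^{-1/2}\V{w}\V{w}^T\hat S^{-1/2}\V{y}\;=\;(\lambda^2-\lambda)\V{y},
\]
a genuine quadratic ($\lambda$-dependent) pencil. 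So $\V{y}$ is not an eigenvector of $\hat S^{-1/2}S\hat S^{-1/2}$, there is no componentwise decoupling into "kernel" and "non-kernel" directions (the rank-one term couples them), and the scalar relation is not $\lambda^2-\lambda-\sigma=0$. The correct reduction is to take the Rayleigh quotient of the pencil with $\V{y}$ itself, which, using $\hat S-\rho\V{w}\V{w}^T=M_p^{\circ}$, yields exactly
\[
\lambda^2-\lambda\,\frac{\V{p}^TM_p^{\circ}\V{p}}{\V{p}^T\hat S\V{p}}-\frac{\V{p}^TS\V{p}}{\V{p}^T\hat S\V{p}}=0,\qquad \V{p}=\hat S^{-1/2}\V{y},
\]
i.e.\ the linear coefficient is not $1$ but the quotient $m=\V{p}^TM_p^{\circ}\V{p}/\V{p}^T\hat S\V{p}$, which ranges over $[\lambda_{\min}(M_p^\circ)/(\rho+\lambda_{\min}(M_p^\circ)),\,1]=[\mathcal O(h^d),1]$ because $\rho$ is finite while $\lambda_{\min}(M_p^\circ)=\mathcal O(h^d)$.

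This is not a cosmetic difference: the intervals you derive from $\lambda^2-\lambda-\sigma=0$ are \emph{false} bounds. With $m=\mathcal O(h^d)$ the positive root $\tfrac12\bigl(m+\sqrt{m^2+4s}\bigr)$ can be as small as $\sqrt{s}\approx\sqrt{C_1}$, which lies strictly below your claimed lower bound $\tfrac12(1+\sqrt{1+4C_1})$; likewise the negative root $-2s/(m+\sqrt{m^2+4s})$ can be as negative as about $-d/\sqrt{C_1}$, outside your claimed interval $[-2d/(1+\sqrt{1+4d}),\,\cdot\,]$. The asymmetric endpoints $-d/\sqrt{C_1}$ and $\sqrt{C_1}$ in \eqref{lem:eigen_bound_diag-1} are precisely the artifact of $m$ being allowed to degenerate to $\mathcal O(h^d)$; your argument, which fixes $m=1$, cannot produce them, and your subsequent "endpoint comparisons" only show that your (invalid) intervals sit inside the lemma's, which does not prove the lemma. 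To repair the proof, keep the coefficient $m$, bound it by $\mathcal O(h^d)\le m\le 1$ as above, bound $s$ by Lemma~\ref{eigen_bound2}, and then optimize the two roots over both ranges; the rest of your endpoint algebra is then essentially what the paper does.
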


\begin{proof}
    The eigenvalue problem of the preconditioned system $ \mathcal{P}_d^{-1} \mathcal{A} $ reads as
   \begin{align}
    \begin{bmatrix}
        A & -(B^{\circ})^T \\
       -B^{\circ} &  -\rho \V{w}\V{w}^T
    \end{bmatrix}
    \begin{bmatrix}
       \V{u} \\
        \V{p}
    \end{bmatrix} = \lambda
   \begin{bmatrix}
        A & 0 \\
        0 & \hat{S}
    \end{bmatrix} 
        \begin{bmatrix}
      \V{u} \\
        \V{p}
    \end{bmatrix} .
    \label{mu0system}
\end{align}
It is not difficult to show that $\lambda = 1$ is not an eigenvalue.
Solving the first equation for $\V{u}$ and substituting it into the second equation, we get
\begin{align*}
    \lambda^2 \hat{S} \V{p} - \lambda M^{\circ}_p \V{p} - S \V{p} = \V{0}.
\end{align*}
From this, we obtain 
\begin{align*}
    \lambda^2  - \lambda \frac{\V{p}^TM^{\circ}_p \V{p}}{\V{p}^T \hat{S} \V{p}} - \frac{\V{p}^T S \V{p}}{\V{p}^T \hat{S} \V{p}} = 0,
\end{align*}
which leads to
\[
\lambda_{\pm} = \frac{1}{2} \frac{\V{p}^T M^{\circ}_p\V{p}}{\V{p}^T \hat{S} \V{p}} \pm \frac{1}{2}\sqrt{ (\frac{\V{p}^T M^{\circ}_p \V{p}}{\V{p}^T \hat{S} \V{p}})^2
+ 4 \frac{\V{p}^T S\V{p}}{\V{p}^T \hat{S} \V{p}} }.
\]
Lemma~\ref{eigen_bound2} implies
\begin{align}
    C_1 + \mathcal{O}(h^{d})
    \le \frac{\V{p}^T S\V{p}}{\V{p}^T \hat{S} \V{p}} \le d .
    \label{eigen_bound_diag3}
\end{align}
Moreover, we have
\begin{align*}
    \frac{\V{p}^TM_p^{\circ}\V{p}}{\V{p}^T \hat{S} \V{p}} = \frac{\V{p}^T M_p^{\circ}\V{p}}{\V{p}^T (\rho \V{w}\V{w}^T + M_p^{\circ}) \V{p}} \leq 1
\end{align*}
and
\begin{align*}
     \frac{\V{p}^T \hat{S} \V{p}}{\V{p}^TM_p^{\circ}\V{p}} \leq 
     1 + \rho\frac{\V{p}^T \V{w}\V{w}^T \V{p} }{\V{p}^T M_p^{\circ} \V{p}}
     \leq 1 + \frac{\rho}{\lambda_{\min} (M_p^{\circ})},
\end{align*}
which lead to
\begin{align}
 \frac{\lambda_{\min} (M_p^{\circ})}{\rho+ \lambda_{\min} (M_p^{\circ})}  \leq 
 \frac{\V{p}^TM_p^{\circ}\V{p}}{\V{p}^T \hat{S} \V{p}} \leq 1.
 \label{eigen_bound_diag0}
\end{align}
Using \eqref{eigen_bound_diag3} and \eqref{eigen_bound_diag0}, we know that the positive eigenvalues are bounded by
\[
\Big[\sqrt{C_1} + \mathcal{O}(h^d), \;  \frac{1}{2}(1+\sqrt{1+4d})\Big],
\]
where we have used $\frac{\lambda_{\min} (M_p^{\circ})}{\rho+ \lambda_{\min} (M_p^{\circ})} = \mathcal{O}(h^d)$.

The negative eigenvalues can be rewritten as
\begin{align*}
 \lambda_{-} & =
\frac{ -2 \frac{\V{p}^T S\V{p}}{\V{p}^T \hat{S} \V{p}}}{\frac{\V{p}^TM_p^{\circ}\V{p}}{\V{p}^T \hat{S} \V{p}}  
 + 
 \sqrt{(\frac{\V{p}^TM_p^{\circ}\V{p}}{\V{p}^T \hat{S} \V{p}})^2 + 4  \frac{\V{p}^T S\V{p}}{\V{p}^T \hat{S} \V{p}} } }   .
\end{align*}
From \eqref{eigen_bound_diag3} and \eqref{eigen_bound_diag0}, $\lambda_{-}$ is bounded by
\begin{align*}
    \Big[ -\frac{d}{\sqrt{C_1}  } + \mathcal{O}(h^d), -\frac{2 C_1 }{1+\sqrt{1+4d}} + \mathcal{O}(h^d) \Big].
\end{align*}
These provide the bounds for the eigenvalues of $ \mathcal{P}_d^{-1} \mathcal{A} $.
\end{proof}




\begin{pro}
\label{pro:MINRES_conv}
The residual of MINRES applied to the preconditioned system $\mathcal{P}_{d}^{-1} \mathcal{A} $ is bounded by   
\begin{align}
    \frac{\| \V{r}_{2k}\| }{\| \V{r}_0 \|} 
    \le 2 \left( \frac{\sqrt{d} (1+\sqrt{1+4d})-2 C_1}{\sqrt{d} (1+\sqrt{1+4d})+2 C_1} + \mathcal{O}(h^d)\right)^k ,
     \label{pro:MINRES_conv-1}
\end{align}
where $C_1$ is given in (\ref{C1-1}).
\end{pro}

\begin{proof}
As shown in Lemma~\ref{lem:eigen_bound_diag}, the eigenvalues of the preconditioned system $\mathcal{P}_d^{-1}\mathcal{A}$ are bounded by
$[-a_1,-b_1] \cup [c_1,d_1]$,
where
\begin{align*}
  & a_1 =  \frac{d}{\sqrt{C_1}  } + \mathcal{O}(h^d)  ,
  \qquad
   b_1 = \frac{2 C_1 }{1+\sqrt{1+4d}} + \mathcal{O}(h^d)  ,
   \\
   & c_1 = \sqrt{C_1} + \mathcal{O}(h^d),  
    \qquad
   d_1 = \frac{1}{2}(1+\sqrt{1+4d}).
\end{align*}
From \cite[Theorem 6.13]{Elman-2014}, after $2k$ steps of MINRES iteration, the residual satisfies
\begin{align*}
    \| \V{r}_{2k}\| 
& \leq 2 \left( \frac{\sqrt{\frac{a_1d_1}{b_1c_1}} - 1}{\sqrt{\frac{a_1d_1}{b_1c_1}} + 1}\right)^k \| \V{r}_0 \|
= 2 \left( \frac{\sqrt{d}(1+\sqrt{1+4d})-2 C_1}{\sqrt{d} (1+\sqrt{1+4d})+2 C_1} + \mathcal{O}(h^d)\right)^k \| \V{r}_0 \|.
\end{align*}
This gives \eqref{pro:MINRES_conv-1}.
\end{proof}

Proposition~\ref{pro:MINRES_conv} shows that on a quasi-uniform mesh, where $ C_1 = \mathcal{O}(1)$,
the convergence of MINRES for the preconditioned system $ \mathcal{P}_d^{-1} \mathcal{A} $ is essentially
independent of the parameters $h $ and $\mu$.


\subsection{Convergence of GMRES with block triangular Schur complement preconditioning}
\label{sec:gmres}

Now we study the convergence of GMRES for the regularized system (\ref{scheme_matrix_2})
with the block triangular Schur complement preconditioner
\begin{align}
    \mathcal{P}_t = 
    \begin{bmatrix}
        A & 0 \\
        -B^{\circ} & -\hat{S}
    \end{bmatrix},\quad \hat{S} =  \rho\V{w} \V{w}^T  + M_p^{\circ}.
    \label{PrecondP-tri}
\end{align}
We present the analysis only for the lower preconditioner \eqref{PrecondP-tri}, as the block upper and lower triangular preconditioners perform similarly.

\begin{pro}
    \label{pro:GMRES_conv}
The residual of GMRES applied to the preconditioned system $\mathcal{P}_{t}^{-1} \mathcal{A}$ is bounded by
\begin{align}
\frac{\| \V{r}_k\|}{\| \V{r}_0\|} 
\le 
2\left(1+\left (\frac{d\, \lambda_{\max} (M_p^{\circ})}{\lambda_{\min} (A)}\right )^\frac{1}{2} + d\right) 
\left(\frac{\sqrt{d} - \displaystyle 
 \sqrt{C_1}}{\sqrt{d} + \displaystyle \sqrt{C_1 } } \; + \mathcal{O}(h^d)\right)^{k-1} ,
\label{GMRES-residual-5}
\end{align}
where $C_1$ is given in (\ref{C1-1}).
\end{pro}

\begin{proof}
    From \cite[Lemma~A.1]{HuangWang_CiCP_2025}, 
    the residual of GMRES for the preconditioned system $\mathcal{P}_{t}^{-1} \mathcal{A}$ is bounded as 
    \begin{align}
        \frac{\| \V{r}_k \|}{\|\V{r}_0\|} \le
        (1+\|A^{-1}(B^{\circ})^T\| + \| \hat{S}^{-1} S \|) \min\limits_{\substack{p \in \mathbb{P}_{k-1}\\ p(0) = 1}} \| p(\hat{S}^{-1} S) \| ,
        \label{bound0}
        \end{align}
where $\mathbb{P}_{k-1}$ denotes the set of polynomials of degree up to $k-1$.
Lemma~\ref{eigen_bound2} implies
\[
\| \hat{S}^{-1} S \| \le d .
\]
Moreover, since $A$ and $M_p^{\circ}$ are symmetric and positive definite,  we have
\begin{align*}
    \| A^{-1} (B^{\circ})^T \|^2 
    &= \sup_{\V{p} \neq 0} \frac{\V{p}^T B^{\circ} A^{-1} A^{-1} (B^{\circ})^T \V{p}}{\V{p}^T  \V{p}} \nonumber
    \\
   & = \sup_{\V{p} \neq 0} \frac{\V{p}^T (M_p^{\circ})^{\frac{1}{2}} (M_p^{\circ})^{-\frac{1}{2}}B^{\circ} A^{-1}A^{-1} (B^{\circ})^T (M_p^{\circ})^{-\frac{1}{2}} (M_p^{\circ})^{\frac{1}{2}} \V{p}}{\V{p}^T \V{p}}
   \notag \\
   & \le \lambda_{\max} (A^{-1}) \lambda_{\max} (M_p^{\circ})
 \sup_{\V{u} \neq 0} \frac{\V{u}^T B^{\circ} (M_p^{\circ})^{-1} (B^{\circ})^T \V{u}}{\V{u}^T A\V{u}}
 \notag \\
 & \le \frac{d\, \lambda_{\max} (M_p^{\circ})}{\lambda_{\min} (A)} .
\end{align*}

For the minmax problem in \eqref{bound0}, by shifted Chebyshev polynomials (e.g., see \cite[Pages 50-52]{Greenbaum-1997})
and Lemma~\ref{eigen_bound2}, we have
\begin{align*}
   \min\limits_{\substack{p \in \mathbb{P}_{k-1}\\ p(0) = 1}} \| p(\hat{S}^{-1}S) \|
& = \min\limits_{\substack{p \in \mathbb{P}_{k-1}\\ p(0) = 1}} \max_{i=1,..., N} |p(\lambda_i(\hat{S}^{-1}S))|
\le \min\limits_{\substack{p \in \mathbb{P}_{k-1}\\ p(0) = 1}} \max_{\gamma \in \left[ C_1 + \mathcal{O}(h^d),d\right]} |p(\gamma)| 
\\
& \leq 2 \left(\frac{\sqrt{d} - \displaystyle 
 \sqrt{C_1}}{\sqrt{d} + \displaystyle \sqrt{C_1 } } \; + \mathcal{O}(h^d)\right)^{k-1} .
\end{align*}
Combining the above results we obtain \eqref{GMRES-residual-5}.
\end{proof}

Recall that ${\lambda_{\max} (M_p^{\circ})}/{\lambda_{\min} (A)} = \mathcal{O}(h^{d-2})$
for a quasi-uniform mesh (e.g., see \cite{LinLiuFarrah_JCAM_2015}). Thus, Proposition~\ref{pro:GMRES_conv} shows that
the convergence of GMRES with the block triangular preconditioner (\ref{PrecondP-tri})
for the iterative solution of the regularized system (\ref{scheme_matrix_2}) is essentially independent of $\mu$ and $h$.

\section{Convergence analysis of MINRES and GMRES with small $\gamma$ and $\rho$}
\label{sec:small-gamma}

In this section we consider the situation where $\gamma$ and $\rho$ are small.
In this situation, we can consider the regularization term as a perturbation
to the original singular system. More specifically, for both the preconditioned Schur complement and coefficient matrix of the whole system, we estimate the non-small eigenvalues using the Bauer-Fike theorem (e.g., see \cite[Corollary 6.5.8]{Watkins-2010})
and the small eigenvalue using its derivative with respect to $\rho$. 
For notational simplicity, for the current situation we choose the approximation of the Schur complement
$S =  \rho\V{w} \V{w}^T +  B^{\circ} A^{-1} (B^{\circ})^T$ as
\begin{align}
    \label{hatS-2}
    \hat{S} = M_p^{\circ} .
\end{align}

It is worth pointing out that a special example of small $\gamma$ is the pinning technique with which
a value of the pressure is specified at a location. 
When the pinning location is chosen to the centroid of the first element (e.g., see \cite{HuangWang_CiCP_2025}),
this corresponds to the choice of $\V{w}$
\begin{align}
    \label{w-pinnning}
    \V{w} = [1, 0, ..., 0]^T .
\end{align}
For this case, we have $\gamma = 1/\sqrt{N}$.

\subsection{Eigenvalue estimation for the preconditioned Schur complement}

\begin{lem}
\label{lem:eigen_bound_2}
Assume that $\rho$ is small and satisfies
\begin{align}
   \rho < \beta^2 \lambda_{\min} (M_p^{\circ}) .
    \label{lem:eigen_bound_2-1}
\end{align}
Then, the first eigenvalue of $\hat{S}^{-1} S$ is given by
\begin{align}
\lambda_1(\hat{S}^{-1} S) = \frac{\gamma^2 N}{|\Omega|} \rho + \mathcal{O}(\rho^2),
    \label{lem:eigen_bound_2-3}
\end{align}
and the other eigenvalues are bounded by
\begin{align}
    \beta^2 - \frac{\rho}{\lambda_{\min}(M_p^{\circ})} \le \lambda_i(\hat{S}^{-1} S) \le d + \frac{\rho}{\lambda_{\min}(M_p^{\circ})},
    \quad i = 2, ..., N.
    \label{lem:eigen_bound_2-4}
\end{align}
\end{lem}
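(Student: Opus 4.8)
The plan is to reduce the generalized eigenvalue problem $S\V{x} = \lambda\hat{S}\V{x}$ with $\hat S = M_p^{\circ}$ to a symmetric matrix perturbation problem, and then to treat the rank-one term $\rho\V{w}\V{w}^T$ as a small perturbation of the unregularized Schur complement $S_0 := B^{\circ}A^{-1}(B^{\circ})^T$. Write $S = S_0 + \rho\V{w}\V{w}^T$ and conjugate by $(M_p^{\circ})^{-1/2}$ (legitimate since $M_p^{\circ}$ is diagonal with positive entries by \eqref{mass-1}): set $T_0 := (M_p^{\circ})^{-1/2}S_0(M_p^{\circ})^{-1/2}$, $\tilde{\V{w}} := (M_p^{\circ})^{-1/2}\V{w}$, and $T(\rho) := (M_p^{\circ})^{-1/2}S(M_p^{\circ})^{-1/2} = T_0 + \rho\,\tilde{\V{w}}\tilde{\V{w}}^T$, so that the eigenvalues of $\hat S^{-1}S$ are exactly those of the symmetric matrix $T(\rho)$. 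Since $A$ is symmetric positive definite, Lemma~\ref{lem:B0-1} gives $S_0\V{p} = 0 \iff \V{p}\in\mathrm{span}(\V{1})$; hence $T_0$ is symmetric positive semidefinite with a \emph{simple} zero eigenvalue whose unnormalized eigenvector is $(M_p^{\circ})^{1/2}\V{1}$, while by the inf-sup condition (Lemma~\ref{inf_sup}) and the bound \eqref{AB-1} its remaining $N-1$ eigenvalues lie in $[\beta^2,d]$.

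Next I dispose of the $N-1$ non-small eigenvalues with a Weyl / Bauer--Fike estimate. Because $\tilde{\V{w}}\tilde{\V{w}}^T$ has the single nonzero eigenvalue $\|\tilde{\V{w}}\|^2 = \V{w}^T(M_p^{\circ})^{-1}\V{w}\le 1/\lambda_{\min}(M_p^{\circ})$ (using $\|\V{w}\| = 1$ from \eqref{w_def}), we have $\|\rho\,\tilde{\V{w}}\tilde{\V{w}}^T\|_2\le \rho/\lambda_{\min}(M_p^{\circ})$, and since this perturbation is positive semidefinite of rank one, the increasingly-ordered eigenvalues obey $\lambda_i(T_0)\le \lambda_i(T(\rho))\le \lambda_i(T_0)+\rho/\lambda_{\min}(M_p^{\circ})$. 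For $i = 2,\dots,N$ this yields \eqref{lem:eigen_bound_2-4}. The hypothesis \eqref{lem:eigen_bound_2-1}, i.e.\ $\rho/\lambda_{\min}(M_p^{\circ}) < \beta^2$, enters here: it forces $\lambda_1(T(\rho))\le \rho/\lambda_{\min}(M_p^{\circ}) < \beta^2 \le \lambda_i(T(\rho))$ for all $i\ge 2$, so $\lambda_1$ remains simple and strictly separated from the rest of the spectrum throughout this regime.

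For the smallest eigenvalue I use first-order perturbation theory (equivalently, $\lambda_1'(0) = \V{q}^T (dT/d\rho)|_0\,\V{q}$, as hinted in the section preamble). Since $\lambda_1(0) = 0$ is a simple eigenvalue of $T_0$, the map $\rho\mapsto\lambda_1(\rho)$ is analytic near $0$ with $\lambda_1(\rho) = \rho\,\V{q}^T\tilde{\V{w}}\tilde{\V{w}}^T\V{q} + \mathcal{O}(\rho^2) = \rho\,(\tilde{\V{w}}^T\V{q})^2 + \mathcal{O}(\rho^2)$, where $\V{q}$ is the \emph{unit} null vector of $T_0$. From \eqref{mass-1} and \eqref{vector-1}, $\|(M_p^{\circ})^{1/2}\V{1}\|^2 = \V{1}^TM_p^{\circ}\V{1} = \frac{1}{N}\sum_{K}|K| = |\Omega|/N$, so $\V{q} = \sqrt{N/|\Omega|}\,(M_p^{\circ})^{1/2}\V{1}$, whence $\tilde{\V{w}}^T\V{q} = \sqrt{N/|\Omega|}\,\V{w}^T\V{1} = \sqrt{N/|\Omega|}\,\gamma$ by \eqref{w_def}. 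Substituting gives $\lambda_1(\rho) = (\gamma^2 N/|\Omega|)\,\rho + \mathcal{O}(\rho^2)$, which is \eqref{lem:eigen_bound_2-3}. (If one wishes to avoid invoking analytic perturbation theory, the same conclusion follows by writing the $\lambda_1$-eigenvector of $T(\rho)$ as $\V{q}$ plus an $\mathcal{O}(\rho)$ correction orthogonal to $\V{q}$ — its smallness being forced by the spectral gap $\beta^2$ established above — and expanding the Rayleigh quotient $\V{q}_\rho^T T(\rho)\V{q}_\rho$.)

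The routine ingredients are the Weyl/Bauer--Fike bound and the elementary identity $\V{1}^T M_p^{\circ}\V{1} = |\Omega|/N$. The one point that needs care is the justification that $\lambda_1$ stays simple throughout the range $\rho < \beta^2\lambda_{\min}(M_p^{\circ})$, so that the first-order expansion is legitimate with exactly the eigenvector $\V{q}$ and the remainder is genuinely $\mathcal{O}(\rho^2)$; this is precisely where hypothesis \eqref{lem:eigen_bound_2-1} is needed, namely to keep $\lambda_1(T(\rho))$ bounded away from $\lambda_2(T(\rho))\ge\beta^2$.
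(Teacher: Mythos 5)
Your proposal is correct and follows essentially the same route as the paper: conjugate by $(M_p^{\circ})^{-1/2}$ to a symmetric problem, treat $\rho\,\tilde{\V{w}}\tilde{\V{w}}^T$ as a perturbation of norm at most $\rho/\lambda_{\min}(M_p^{\circ})$ to bound the $N-1$ eigenvalues away from zero, and obtain $\lambda_1'(0)=\gamma^2 N/|\Omega|$ by first-order perturbation of the simple zero eigenvalue (the paper computes this derivative by differentiating the generalized eigenvalue equation and left-multiplying by $\V{1}^T$, which is equivalent to your Rayleigh-quotient formula with the normalized null vector $\V{q}=\sqrt{N/|\Omega|}\,(M_p^{\circ})^{1/2}\V{1}$). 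Your use of Weyl's monotonicity for the PSD rank-one perturbation is a marginally sharper variant of the paper's Bauer--Fike step but yields the same stated bounds.
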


\begin{proof}
Notice that $\hat{S}^{-1} S$ is similar to 
\begin{align}
    \hat{S}^{-\frac{1}{2}} \; S \; \hat{S}^{-\frac{1}{2}}
= (M_p^{\circ})^{-\frac{1}{2}} B^{\circ} A^{-1}(B^{\circ})^T (M_p^{\circ})^{-\frac{1}{2}}
+ \rho (M_p^{\circ})^{-\frac{1}{2}} \V{w}\V{w}^T (M_p^{\circ})^{-\frac{1}{2}} .
\label{lem:eigen_bound_2-5}
\end{align}
The term involving $\rho$ on the right-hand side can be viewed as a perturbation to the first term, which has
a zero eigenvalue and the other eigenvalues lying in $[\beta^2, d]$, where $\beta$ is the inf-sup constant;
e.g., see \cite{HuangWang_CiCP_2025}.
It is not difficult to show 
\[
\| \rho (M_p^{\circ})^{-\frac{1}{2}} \V{w}\V{w}^T (M_p^{\circ})^{-\frac{1}{2}} \| \leq \frac{\rho}{\lambda_{\min} (M_p^{\circ})}.
\]
The Bauer-Fike theorem (e.g., see \cite[Corollary 6.5.8]{Watkins-2010}) implies that the nonzero eigenvalues 
of \eqref{lem:eigen_bound_2-5} are bounded by (\ref{lem:eigen_bound_2-4}).
Notice that the lower bound is positive when (\ref{lem:eigen_bound_2-1}) is satisfied.

For the smallest eigenvalue $\lambda_1(\rho) = \lambda_1(\hat{S}^{-1} S)$, we consider the eigenvalue problem
\begin{align*}
    \Big( (M_p^{\circ})^{-\frac 1 2} B^{\circ}(A)^{-1} (B^{\circ})^T (M_p^{\circ})^{-\frac 1 2} + \rho (M_p^{\circ})^{-\frac{1}{2}} \V{w}\V{w}^T (M_p^{\circ})^{-\frac{1}{2}}  \Big) \V{p} (\rho) = \lambda_1(\rho) \V{p}(\rho),
\end{align*}
which is equivalent to
\begin{align}
    \Big(  B^{\circ}(A)^{-1} (B^{\circ})^T + \rho \V{w}\V{w}^T \Big) \tilde{\V{p}} (\rho) = \lambda_1(\rho) M_p^{\circ} \tilde{\V{p}}(\rho),
    \label{lem:eigen_bound_2-6}
\end{align}
with 
$ \tilde{\V{p}}(\rho) =(M_p^{\circ})^{-\frac 1 2}  \V{p}(\rho)$.
Notice that when $\rho = 0$, we have $\lambda_1 (0) = 0$ and $\tilde{\V{p}}(0) = \V{1}$.
Differentiating \eqref{lem:eigen_bound_2-6} with respect to $\rho$, we have
\begin{align}
     \Big(  B^{\circ}(A)^{-1} (B^{\circ})^T + \rho \V{w}\V{w}^T \Big) \frac{\partial \tilde{\V{p}}}{\partial \rho} (\rho)  + \V{w}\V{w}^T \; \tilde{\V{p}} (\rho)  = \frac{\partial \lambda_1}{\partial \rho} (\rho) M_p^{\circ}\; \tilde{\V{p}}(\rho) + 
     \lambda_1(\rho)  M_p^{\circ}\; \frac{\partial \tilde{\V{p}}}{\partial \rho} (\rho) .
    \label{lem:eigen_bound_2-7}
\end{align}
Taking $\rho = 0$ in \eqref{lem:eigen_bound_2-7}, multiplying from the left by $\V{1}^T$, and noticing that  $\V{1}$ is in the null space of $(B^{\circ})^T$
and $\gamma = \V{w}^T \V{1}$, we have
\begin{align*}
\frac{\partial \lambda_1}{\partial \rho} (0) = 
\frac{\gamma^2}{ \V{1}^T  M_p^{\circ} \V{1}} .
\end{align*}
This, together with $\V{1}^T  M_p^{\circ} \V{1}  = \frac{|\Omega|}{N}$, gives \eqref{lem:eigen_bound_2-3}.
\end{proof}


\subsection{Convergence of MINRES}

Now, we study the iterative solution of the regularized system \eqref{scheme_matrix_2} using MINRES and the block diagonal Schur complement preconditioner
\begin{align}
    \mathcal{P}_d = 
    \begin{bmatrix}
        A & 0 \\
        0 & \hat{S}
    \end{bmatrix} ,\quad \hat{S} = M_p^{\circ} .
    \label{PrecondP-diag-small}
\end{align}

\begin{lem}
    \label{lem:eigen_bound_diag2}
Assume that $\rho$ is small and satisfies 
\begin{align}
    \label{lem:eigen_bound_diag2-1}
    \frac{\rho}{\lambda_{\min} (M_p^{\circ})} \le \frac{\sqrt{1+4 \beta^2 }-1}{2} .
\end{align}
Then the eigenvalues of $\mathcal{P}_d^{-1} \mathcal{A}$ lie in 
\begin{align}
&\Big [\frac{1-\sqrt{1+4 d }}{2} - \frac{\rho}{\lambda_{\min} (M_p^{\circ})} , \frac{1-\sqrt{1+4 \beta^2}}{2} + \frac{\rho}{\lambda_{\min} (M_p^{\circ})}\Big ]
\bigcup \Big \{ -\frac{\gamma^2 N}{|\Omega|} \rho + \mathcal{O}(\rho^2) \Big \}
\notag \\
& \qquad \qquad \qquad \qquad \bigcup \Big [ \frac{1+\sqrt{1+4 \beta^2}}{2} - \frac{\rho}{\lambda_{\min} (M_p^{\circ})},
\frac{1+\sqrt{1+4 d}}{2} + \frac{\rho}{\lambda_{\min} (M_p^{\circ})} \Big ] .
 \label{lem:eigen_bound_diag2-0}
\end{align}

\end{lem}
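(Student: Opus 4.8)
The plan is to view the regularization term $-\rho\V{w}\V{w}^{T}$ in the $(2,2)$ block as a symmetric perturbation of the unregularized ($\rho=0$) preconditioned matrix and to locate the eigenvalues accordingly. Since $\mathcal{P}_{d}$ in \eqref{PrecondP-diag-small} is symmetric positive definite, $\mathcal{P}_{d}^{-1}\mathcal{A}$ is similar to the real symmetric matrix $\widetilde{\mathcal{M}}:=\mathcal{P}_{d}^{-1/2}\mathcal{A}\,\mathcal{P}_{d}^{-1/2}$, so it suffices to place the eigenvalues of $\widetilde{\mathcal{M}}$. Write $\widetilde{\mathcal{M}}=\widetilde{\mathcal{M}}_{0}+\rho\,\widetilde{\mathcal{E}}_{1}$, where $\widetilde{\mathcal{M}}_{0}$ is the analogous matrix for $\rho=0$ and
\begin{align*}
\widetilde{\mathcal{E}}_{1}=
\begin{bmatrix}
0 & 0\\
0 & -(M_{p}^{\circ})^{-\frac12}\V{w}\V{w}^{T}(M_{p}^{\circ})^{-\frac12}
\end{bmatrix}.
\end{align*}
Exactly as in the proof of Lemma~\ref{lem:eigen_bound_2}, $\|\rho\,\widetilde{\mathcal{E}}_{1}\|_{2}=\rho\,\V{w}^{T}(M_{p}^{\circ})^{-1}\V{w}\le\rho/\lambda_{\min}(M_{p}^{\circ})$, which is the small quantity appearing in \eqref{lem:eigen_bound_diag2-0} and is controlled by hypothesis \eqref{lem:eigen_bound_diag2-1}.

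The second step is to record the spectrum of $\widetilde{\mathcal{M}}_{0}$. On one hand, any $(\V{u},\V{0})$ with $B^{\circ}\V{u}=\V{0}$ satisfies $\mathcal{A}(\V{u},\V{0})^{T}=\mathcal{P}_{d}(\V{u},\V{0})^{T}$, so $1$ is an eigenvalue of $\mathcal{P}_{d}^{-1}\mathcal{A}$ (hence of $\widetilde{\mathcal{M}}_{0}$); since $-\rho\V{w}\V{w}^{T}$ acts trivially on a vanishing pressure component, this eigenvalue and its eigenspace are left unchanged by the regularization, so $1$ is an eigenvalue of $\widetilde{\mathcal{M}}$ as well. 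On the other hand, Lemma~\ref{lem:B0-1} (together with positivity of $M_{p}^{\circ}$) implies that $\lambda=1$ has no eigenvector with nonzero pressure component, so for all the other eigenvalues of $\widetilde{\mathcal{M}}_{0}$ we may eliminate the velocity unknown and obtain $\lambda^{2}-\lambda-\mu=0$ with $\mu$ an eigenvalue of $(M_{p}^{\circ})^{-1/2}B^{\circ}A^{-1}(B^{\circ})^{T}(M_{p}^{\circ})^{-1/2}$, whose nonzero values lie in $[\beta^{2},d]$ (by Lemma~\ref{inf_sup} and \eqref{AB-1}, as recalled in the proof of Lemma~\ref{lem:eigen_bound_2}) while the single zero value (eigenvector $\propto(M_{p}^{\circ})^{1/2}\V{1}$, since $(B^{\circ})^{T}\V{1}=\V{0}$) gives $\lambda=0$. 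Hence the eigenvalues of $\widetilde{\mathcal{M}}_{0}$ other than $1$ lie in $[\tfrac12(1-\sqrt{1+4d}),\tfrac12(1-\sqrt{1+4\beta^{2}})]\cup\{0\}\cup[\tfrac12(1+\sqrt{1+4\beta^{2}}),\tfrac12(1+\sqrt{1+4d})]$. By the Bauer--Fike (Weyl) inequality for symmetric matrices, each eigenvalue of $\widetilde{\mathcal{M}}=\widetilde{\mathcal{M}}_{0}+\rho\widetilde{\mathcal{E}}_{1}$ lies within $\rho/\lambda_{\min}(M_{p}^{\circ})$ of an eigenvalue of $\widetilde{\mathcal{M}}_{0}$, which already yields the first and third intervals in \eqref{lem:eigen_bound_diag2-0}; hypothesis \eqref{lem:eigen_bound_diag2-1} is exactly what forces the perturbed negative branch to stay $\le 0$ and the perturbed positive branch to stay $\ge 1$, so that these pieces, the eigenvalue $1$ (an isolated point inherited from $\text{Null}(B^{\circ})$ and unaffected by the regularization), and the small eigenvalue treated below remain mutually separated.

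It remains to pin down the eigenvalue branching off from $0$, for which the Weyl bound only gives the crude interval $[-\rho/\lambda_{\min}(M_{p}^{\circ}),\rho/\lambda_{\min}(M_{p}^{\circ})]$. Here I would use that $0$ is a \emph{simple} eigenvalue of $\widetilde{\mathcal{M}}_{0}$, with unit eigenvector $\V{v}_{0}$ whose velocity block vanishes and whose pressure block is $(M_{p}^{\circ})^{1/2}\V{1}/\sqrt{\V{1}^{T}M_{p}^{\circ}\V{1}}$, where $\V{1}^{T}M_{p}^{\circ}\V{1}=|\Omega|/N$. Analytic perturbation theory then produces a simple eigenvalue $\lambda_{0}(\rho)$ of $\widetilde{\mathcal{M}}(\rho)$ with $\lambda_{0}(0)=0$ and $\lambda_{0}'(0)=\V{v}_{0}^{T}\widetilde{\mathcal{E}}_{1}\V{v}_{0}$; using $\V{w}^{T}\V{1}=\gamma$ this evaluates to $-\gamma^{2}N/|\Omega|$, so $\lambda_{0}(\rho)=-\tfrac{\gamma^{2}N}{|\Omega|}\rho+\mathcal{O}(\rho^{2})$, the middle entry of \eqref{lem:eigen_bound_diag2-0}. (This is the exact analogue of the derivative computation in the proof of Lemma~\ref{lem:eigen_bound_2}; alternatively one may quote $\lambda_{1}(\hat{S}^{-1}S)=\tfrac{\gamma^{2}N}{|\Omega|}\rho+\mathcal{O}(\rho^{2})$ from there and note that, after eliminating the velocity, the associated eigenvalue of $\mathcal{P}_{d}^{-1}\mathcal{A}$ equals $-\lambda_{1}(\hat{S}^{-1}S)+\mathcal{O}(\rho^{2})$.) The main obstacle I anticipate is the bookkeeping required to make this fully rigorous: one must verify that under \eqref{lem:eigen_bound_diag2-1}, together with the standing smallness of $\rho$, the branch $\lambda_{0}(\rho)$ stays strictly inside the gap between the (negative) left cluster and the cluster at $1$, so that the first-order expansion indeed identifies it and no eigenvalue crossing occurs, and that the multiplicities over the three pieces, the point $\{1\}$, and $\{\lambda_{0}(\rho)\}$ add up to $N+\dim\mathbf{V}_{h}^{0}$.
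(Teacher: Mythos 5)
Your proposal is correct and follows essentially the same route as the paper's (very terse) proof: pass to the symmetric similarity transform, treat the $\rho$-term as a perturbation handled by Bauer--Fike/Weyl for the eigenvalues bounded away from zero, and capture the eigenvalue branching off from $0$ by the first-order derivative $\lambda_0'(0)=\V{v}_0^T\widetilde{\mathcal{E}}_1\V{v}_0=-\gamma^2N/|\Omega|$, exactly as in Lemma~\ref{lem:eigen_bound_2}. Your explicit bookkeeping of the eigenvalue $\lambda=1$ coming from $\mathrm{Null}(B^{\circ})$ is in fact more careful than the paper, which asserts in the analogous Lemma~\ref{lem:eigen_bound_diag} that $\lambda=1$ is not an eigenvalue and whose stated positive interval here has left endpoint $\ge 1$ (with equality only when \eqref{lem:eigen_bound_diag2-1} is an equality), so strictly speaking the point $\{1\}$ should be adjoined to the union in \eqref{lem:eigen_bound_diag2-0}.
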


\begin{proof}
Notice that $\mathcal{P}_d^{-1} \mathcal{A}$ is similar to
\begin{align*}
    \mathcal{P}_d^{-\frac{1}{2}} \mathcal{A} \mathcal{P}_d^{-\frac{1}{2}}&= 
    \begin{bmatrix}
        A^{\frac{1}{2}} & 0 \\
        0 &(M_p^{\circ})^{\frac{1}{2}}
    \end{bmatrix}^{-1}
    \begin{bmatrix}
        A & -(B^{\circ})^T \\
       -B^{\circ} & -\rho \V{w}\V{w}^T
       \end{bmatrix}
    \begin{bmatrix}
        A^{\frac{1}{2}} & 0 \\
        0 &(M_p^{\circ})^{\frac{1}{2}}
    \end{bmatrix}^{-1} \notag
    \\ 
    & = \begin{bmatrix}
        \mathcal{I} & -A^{-\frac{1}{2}} (B^{\circ})^T(M_p^{\circ})^{-\frac{1}{2}} \\
        -(M_p^{\circ})^{-\frac{1}{2}} B^{\circ}A^{-\frac{1}{2}} &  -\rho (M_p^{\circ})^{-\frac{1}{2}} \V{w}\V{w}^T  (M_p^{\circ})^{-\frac{1}{2}}
    \end{bmatrix}.
\end{align*}
The rest of the proof is similar to that of Lemma~\ref{lem:eigen_bound_2}. More specifically, the term involving $\rho$ is considered
as a perturbation to the rest. Moreover, the eigenvalue closest to the origin is obtained by differentiating the eigenvalue equation with respect
to $\rho$ and the other eigenvalues are estimated using Lemma~\ref{lem:eigen_bound_2} (for the eigenvalues of the unperturbed system) and with the Bauer-Fike theorem for the perturbed system.
\end{proof}

\begin{pro}
    \label{pro:MINRES_conv-small}
Assume that $\rho$ is small and satisfies \eqref{lem:eigen_bound_diag2-1}. Then, the residual of MINRES applied to the preconditioned system $\mathcal{P}_{d}^{-1} \mathcal{A} $ is bounded by
\begin{align}
    \frac{\| \V{r}_{2k+1}\| }{\| \V{r}_0 \|} 
    &\le 2 \cdot \frac{\frac{1+\sqrt{1+4d}}{2}+ \frac{\rho}{\lambda_{\min} (M_p^{\circ})} + 
   \frac{\gamma^2 N}{|\Omega|} \rho + \mathcal{O}(\rho^2)}{ \frac{\gamma^2 N}{|\Omega|} \rho + \mathcal{O}(\rho^2) }  \notag
  \\
 & \cdot
     \Bigg( \frac{\sqrt{(\frac{\rho}{\lambda_{\min} (M_p^{\circ})} )^2 + \frac{\rho}{\lambda_{\min} (M_p^{\circ})} \sqrt{1+4d} + d } - 
     \sqrt{(\frac{\rho}{\lambda_{\min} (M_p^{\circ})} )^2 - \frac{\rho}{\lambda_{\min} (M_p^{\circ})} \sqrt{1+4\beta^2} + \beta^2 } 
     }{\sqrt{(\frac{\rho}{\lambda_{\min} (M_p^{\circ})} )^2 + \frac{\rho}{\lambda_{\min} (M_p^{\circ})} \sqrt{1+4d} + d } +
     \sqrt{(\frac{\rho}{\lambda_{\min} (M_p^{\circ})} )^2 - \frac{\rho}{\lambda_{\min} (M_p^{\circ})} \sqrt{1+4\beta^2} + \beta^2 } } \Bigg)^k.
\label{pro:MINRES_conv-small-1}
\end{align}
\end{pro}

\begin{proof}
As shown in \cite{MINRES-1975}, the residual of MINRES is given by
\[
\| \V{r}_{2k+1}\| = \min\limits_{\substack{p \in \mathbb{P}_{2k+1}\\ p(0) = 1}} \| p (\mathcal{P}_{d}^{-1}\mathcal{A})  \V{r}_0 \|
\le \min\limits_{\substack{p \in \mathbb{P}_{2k+1}\\ p(0) = 1}} \| p (\mathcal{P}_{d}^{-1} \mathcal{A})\| \; \| \V{r}_0 \|,
\]
where $\mathbb{P}_{2k+1}$ is the set of polynomials of degree up to $2k+1$.
Moreover, bounds on the eigenvalues of $\mathcal{P}_{d}^{-1} \mathcal{A} $ are given in Lemma~\ref{lem:eigen_bound_diag2}.
We denote the eigenvalue intervals by $[-a_1,-b_1]\cup [c_1,d_1]$ and the eigenvalue close to zero by $\lambda_1$.
From Theorem 6.13 of \cite{Elman-2014} (about the residual of MINRES), we have
\begin{align}
    \frac{\| \V{r}_{2k+1}\| }{\| \V{r}_0 \|} 
    &\le \min\limits_{\substack{p \in \mathbb{P}_{2k+1}\\ p(0) = 1}} \max\limits_{ i = 1,2,..,2N-1} | p (\lambda_i)| \notag
\\
 &\le
    \min\limits_{\substack{p \in \mathbb{P}_{2k}\\ p(0) = 1}} 
    \max_{i = 2, ..., 2 N-1} |\frac{(\lambda_i - \lambda_1)}{\lambda_1} p(\lambda_i) | \nonumber
    \\
   &\le \frac{\frac{1+\sqrt{1+4d}}{2}+ \frac{\rho}{\lambda_{\min} (M_p^{\circ})} + 
   \frac{\gamma^2 N}{|\Omega|} \rho+ \mathcal{O}(\rho^2)}{ \frac{\gamma^2 N}{|\Omega|} \rho + \mathcal{O}(\rho^2)}
   \min\limits_{\substack{p \in \mathbb{P}_{2k}\\ p(0) = 1}} 
    \max_{\lambda \in  [-a_1,-b_1]\cup [c_1,d_1]} | p(\lambda) | \nonumber 
    \\
  &   \le 2 \frac{\frac{1+\sqrt{1+4d}}{2}+ \frac{\rho}{\lambda_{\min} (M_p^{\circ})} + 
   \frac{\gamma^2 N}{|\Omega|} \rho+ \mathcal{O}(\rho^2)}{ \frac{\gamma^2 N}{|\Omega|} \rho + \mathcal{O}(\rho^2)} \notag
  \\
  & \qquad \cdot
     \Bigg( \frac{\sqrt{(\frac{\rho}{\lambda_{\min} (M_p^{\circ})} )^2 + \frac{\rho}{\lambda_{\min} (M_p^{\circ})} \sqrt{1+4d} + d } - 
     \sqrt{(\frac{\rho}{\lambda_{\min} (M_p^{\circ})} )^2 - \frac{\rho}{\lambda_{\min} (M_p^{\circ})} \sqrt{1+4\beta^2} + \beta^2 } 
     }{\sqrt{(\frac{\rho}{\lambda_{\min} (M_p^{\circ})} )^2 + \frac{\rho}{\lambda_{\min} (M_p^{\circ})} \sqrt{1+4d} + d } +
     \sqrt{(\frac{\rho}{\lambda_{\min} (M_p^{\circ})} )^2 - \frac{\rho}{\lambda_{\min} (M_p^{\circ})} \sqrt{1+4\beta^2} + \beta^2 } } \Bigg)^k.
\notag
\end{align}
\end{proof}

Notice that (\ref{lem:eigen_bound_diag2-1}) implies $\rho = \mathcal{O}(\lambda_{\min}(M_p^{\circ})) = \mathcal{O}(h^d)$.
Then, the above proposition
shows that the convergence factor of MINRES for the case with small $\gamma$ and $\rho$,
\[
\frac{\sqrt{(\frac{\rho}{\lambda_{\min} (M_p^{\circ})} )^2 + \frac{\rho}{\lambda_{\min} (M_p^{\circ})} \sqrt{1+4d} + d } - 
     \sqrt{(\frac{\rho}{\lambda_{\min} (M_p^{\circ})} )^2 - \frac{\rho}{\lambda_{\min} (M_p^{\circ})} \sqrt{1+4\beta^2} + \beta^2 } 
     }{\sqrt{(\frac{\rho}{\lambda_{\min} (M_p^{\circ})} )^2 + \frac{\rho}{\lambda_{\min} (M_p^{\circ})} \sqrt{1+4d} + d } +
     \sqrt{(\frac{\rho}{\lambda_{\min} (M_p^{\circ})} )^2 - \frac{\rho}{\lambda_{\min} (M_p^{\circ})} \sqrt{1+4\beta^2} + \beta^2 } },
\]
is essentially constant and thus independent
of $\mu$ and $h$. On the other hand, the asymptotic error constant can be written as ${C}/{\gamma^2 N \rho}$,
where $C$ is a constant, which depends on $\gamma$, $\rho$, and $N$ (or $h$).
Since the number of MINRES iterations required to reach convergence is proportional to $\log (\gamma^2 N \rho)$, the dependence
on these parameters are weak.

For the pinning case (\ref{w-pinnning}), the asymptotic error constant can be simplified into $C/\rho$.

\subsection{Convergence of GMRES}

We consider the block lower triangular preconditioner
\begin{align}
    \mathcal{P}_t = 
    \begin{bmatrix}
        A & 0 \\
        -B^{\circ} & -\hat{S}
    \end{bmatrix},\quad \hat{S} =  M_p^{\circ}.
    \label{PrecondP-tri-small}
\end{align}

\begin{pro}
    \label{pro:GMRES_conv-small}
Assume that $\rho$ is small and satisfies \eqref{lem:eigen_bound_2-1}. Then, the residual of GMRES applied to the preconditioned system $\mathcal{P}_{t}^{-1} \mathcal{A} $ is bounded by 
\begin{align}
\frac{\| \V{r}_k\|}{\| \V{r}_0\|} 
 & \le
2 \Bigg (1+\left (\frac{d\, \lambda_{\max} (M_p^{\circ})}{\lambda_{\min} (A)}\right )^\frac{1}{2} + d \Bigg) 
\cdot
\left( \frac{d+\frac{\rho}{\lambda_{\min} (M_p^{\circ})} + \frac{\gamma^2 N}{|\Omega|}\rho + \mathcal{O}(\rho^2)}{\frac{\gamma^2 N}{|\Omega|}\rho + \mathcal{O}(\rho^2)}  \right)
\notag
\\
&
 \qquad \qquad \cdot \left (\frac{\sqrt{d + \frac{\rho}{\lambda_{\min} (M_p^{\circ})}} -\sqrt{\beta^2 - \frac{\rho}{\lambda_{\min} (M_p^{\circ})}}}{\sqrt{d + \frac{\rho}{\lambda_{\min} (M_p^{\circ})}} + \sqrt{\beta^2 - \frac{\rho}{\lambda_{\min} (M_p^{\circ})}} }\right )^{k-2} .
\label{pro:GMRES_conv-small-1}
\end{align}
\end{pro}

\begin{proof}
    We follow the proof of Proposition~\ref{pro:GMRES_conv} except that the outlier eigenvalue
    $\lambda_1$ in \eqref{lem:eigen_bound_2-3} is treated separately from the others that are bounded away from zero.
    Thus, we have
    \begin{align*}
    \min\limits_{\substack{p \in \mathbb{P}_{k-1}\\ p(0) = 1}} \| p(\hat{S}^{-1}S) \|
    & = 
     \min\limits_{\substack{p \in \mathbb{P}_{k-1}\\ p(0) = 1}} 
    \max_{i = 1,..., N} |p(\lambda_i)|
    \nonumber
    \\
    &\le
    \min\limits_{\substack{p \in \mathbb{P}_{k-2}\\ p(0) = 1}} 
    \max_{i = 2, ..., N} |\frac{(\lambda_i - \lambda_1)}{\lambda_1} p(\lambda_i) | \nonumber
    \\
    &\le \frac{d+\frac{\rho}{\lambda_{\min} (M_p^{\circ})}+\lambda_1}{\lambda_1}
    \min\limits_{\substack{p \in \mathbb{P}_{k-2}\\ p(0) = 1}} 
    \max_{ \lambda \in [\beta^2-\frac{\rho}{\lambda_{\min} (M_p^{\circ})},  
    d + \frac{\rho}{\lambda_{\min} (M_p^{\circ})} ]}| p(\lambda) | \nonumber
    \\
    & 
    \le 
    2
\left( \frac{d+\frac{\rho}{\lambda_{\min} (M_p^{\circ})}+ \frac{\gamma^2 N}{|\Omega|}\rho  + \mathcal{O}(\rho^2)}{\frac{\gamma^2 N}{|\Omega|}\rho  + \mathcal{O}(\rho^2)}  \right)
 \left (\frac{\sqrt{d + \frac{\rho}{\lambda_{\min} (M_p^{\circ})}} -\sqrt{\beta^2 - \frac{\rho}{\lambda_{\min} (M_p^{\circ})}}}{\sqrt{d + \frac{\rho}{\lambda_{\min} (M_p^{\circ})}} + \sqrt{\beta^2 - \frac{\rho}{\lambda_{\min} (M_p^{\circ})}} }\right )^{k-2} .
\end{align*}
The other terms are bounded as in Proposition~\ref{pro:GMRES_conv}.
\end{proof}

The condition \eqref{lem:eigen_bound_2-1} implies that $\rho = \mathcal{O}(\lambda_{\min}(M_p^{\circ})) = \mathcal{O}(h^d)$.
Proposition~\ref{pro:GMRES_conv-small} shows that the convergence factor of GMRES for small $\gamma$,
\[
\frac{\sqrt{d + \frac{\rho}{\lambda_{\min} (M_p^{\circ})}} -\sqrt{\beta^2 - \frac{\rho}{\lambda_{\min} (M_p^{\circ})}}}{\sqrt{d + \frac{\rho}{\lambda_{\min} (M_p^{\circ})}} + \sqrt{\beta^2 - \frac{\rho}{\lambda_{\min} (M_p^{\circ})}} },
\]
is essentially independent of $\mu$ and $h$. The asymptotic error constant can be written as $C/(\gamma^2 N \rho)$, which indicates
that the required number of GMRES iterations for convergence depends on $\gamma^2 N \rho$ but this dependence is only logarithmic.

For the pinning case (\ref{w-pinnning}), the asymptotic error constant can be simplified into $C/\rho$.

\section{Numerical experiments}
\label{SEC:numerical}

To demonstrate the performance of MINRES with the block diagonal preconditioner \eqref{PrecondP-diag} or (\ref{PrecondP-diag-small})
and GMRES with the block triangular Schur complement preconditioner \eqref{PrecondP-tri} or \eqref{PrecondP-tri-small},
we present two- and three-dimensional numerical results in this section.
We use MATLAB's function {\em minres} with $tol = 10^{-9}$ for 2D examples and  $tol = 10^{-8}$ for 3D examples with block diagonal preconditioners,  a maximum of 1000 iterations, and the zero vector as the initial guess.
For preconditioned systems with block triangular preconditioners, we use MATLAB's function {\em gmres} with $tol = 10^{-9}$ for 2D examples and  $tol = 10^{-8}$ for 3D examples, $restart = 30$, and the zero vector as the initial guess.
The implementation of block preconditioners requires the action of the inversion of blocks.
The inversion of $\hat{S}$ involves the inverse of the mass matrix $M_{p}^{\circ}$, which is diagonal and thus trivial to invert.
It also involves matrix-vector multiplications as shown in \eqref{hatS_inv} for the choice (\ref{hatS-1}).
The leading block $A$ is the WG approximation of the Laplacian operator, as shown in \eqref{A-1}. 
Its inversion is computed using the conjugate gradient method, preconditioned with an incomplete Cholesky decomposition.
The incomplete Cholesky decomposition is performed using MATLAB's function {\em ichol} with threshold
dropping and a drop tolerance of $10^{-3}$.

We test the performance of iterative solvers with four different choices of $\V{w}$,
where 
\[
\V{w}_1 = \V{1},  \quad
\V{w}_2 = \frac{\sqrt{N}}{(\sum_{i=1}^N   |K_i|^2)^{\frac{1}{2}}} M_p^{\circ} \V{1},  \quad
\V{w}_3 =  \begin{bmatrix}
        1 \\ 0 \\ \vdots \\ 0
    \end{bmatrix}, \quad 
\V{w}_4 = \mbox{Unit random vector in } [0,1].
\]
Among these, $\V{w}_1, \V{w}_2$, and $\V{w}_4$ correspond to finite values of $\gamma$ and $\rho$. 
In the computation, we set $\rho = 1$ for these cases.
The theoretical convergence results of MINRES and GMRES are presented
in Propositions~\ref{pro:MINRES_conv} and ~\ref{pro:GMRES_conv}, respectively.
On the other hand, $\V{w}_3$ represents a pinning case of small $\gamma$ and $\rho$. 
The convergence analysis of iterative solution is presented in Propositions~\ref{pro:MINRES_conv-small}
and \ref{pro:GMRES_conv-small} for MINRES and GMRES, respectively.
For this case, we take $\hat{S} = M_p^{\circ}$. Moreover, as required by the convergence analysis,
$\rho$ should be chosen to satisfy \eqref{lem:eigen_bound_diag2-1} for MINRES and
(\ref{lem:eigen_bound_2-1}) for GMRES. Numerically we found $\beta^2 \approx 0.2$. Thus, we take 
$\rho = 0.1 \times |K|_{\min}$ in our computation.


\subsection{The two-dimensional example}

This two-dimensional (2D) example is adopted from \cite{Mu.2020}, where
$\Omega = (0,1)^2$. 
The exact solutions are given by
\begin{align*}
\V{u} = 
\begin{bmatrix}
    -e^x(y \cos(y)+\sin(y)) \\
    e^x y \sin(y)
\end{bmatrix}, \quad
p = 2 e^x \sin(y),
\end{align*}
and the right-hand side function is
\begin{align*}
\V{f} = 
\begin{bmatrix}
    2(1-\mu) e^x  \sin(y) \\
    2(1-\mu) e^x \cos(y)
\end{bmatrix}.
\end{align*}
We test the performance of the preconditioners with two viscosity values, $\mu = 1$ and $10^{-4}$.


Table~\ref{Pdiag_2D} lists the number of MINRES iterations required to reach the specified tolerance for preconditioned systems
with  different choices of $\V{w}$ and block diagonal preconditioners.
As can be seen, the number for $\V{w}_1$, $\V{w}_2$, and $\V{w}_4$ remains small and consistent for meshes with various size
and $\mu$ changing from $1$ to $10^{-4}$. This is consistent with the theoretical result in Proposition~\ref{pro:MINRES_conv}
which states the convergence of MINRES is independent of $\mu$ and $h$ for finite $\gamma$.
On the other hand, while the required number of iterations for $\V{w}_3$ stays
relatively small (compared to the situation without preconditioning that requires more than 10000 iterations
for convergence for each case), it changes more significantly
for $\mu$ changing from $1$ to $10^{-4}$ and as the mesh is refined. This seems to reflect the theoretical result
of Proposition~\ref{pro:MINRES_conv-small} for small $\gamma$ which states that the convergence factor of MINRES iteration
is independent of $\mu$ and $h$ but its asymptotic error constant does depend on them although weakly.

Table~\ref{Ptri_2D} shows the number of GMRES iterations to reach the required tolerance.
The performance of GMRES is similar to that of MINRES and consistent with the theoretical results of Propositions~\ref{pro:GMRES_conv}
and \ref{pro:GMRES_conv-small}. Generally speaking, GMRES requires less iterations to reach convergence than MINRES.

\begin{table}[h]
    \centering
        \caption{The 2D Example: The number of MINRES iterations required to reach convergence for preconditioned systems with block diagonal preconditioners, $\mu = 1$ and $\mu = 10^{-4}$.}
    \begin{tabular}{|c|c|c|c|c|c|c|}
        \hline
         $\V{w}$ & \diagbox{$\mu$}{$N$} & 232 & 918 & 3680 & 14728 & 58608 \\ \hline \hline
        $\V{w}_1$ & $1$ & 43 & 47 & 49 & 49 & 47 \\  
        & $10^{-4}$ & 42  & 48 & 54 & 58 & 60 \\ \hline 
        \hline 
        $\V{w}_2$ & $1$ &  43 & 47 & 49 & 49 & 47 \\ 
        & $10^{-4}$ & 42  & 48 & 54 & 58 & 60 \\ \hline 
        \hline
        $\V{w}_3$ & $1$ &  62 & 68 & 49 & 49 & 47\\ 
        & $10^{-4}$ & 65 & 75 & 84& 92 & 98\\ \hline 
        \hline
        $\V{w}_4$ & $1$ &  44 & 47 & 49 & 49 & 47  \\ 
        & $10^{-4}$ & 42 & 48 & 54 & 58 & 60 \\ \hline 
    \end{tabular}
    \label{Pdiag_2D}
\end{table}

\begin{table}[h]
    \centering
        \caption{The 2D Example: The number of GMRES iterations required to reach convergence for preconditioned systems with block triangular preconditioners, $\mu = 1$ and $\mu = 10^{-4}$.}
    \begin{tabular}{|c|c|c|c|c|c|c|}
         \hline
         $\V{w}$ & \diagbox{$\mu$}{$N$} & 232 & 918 & 3680 & 14728 & 58608 \\ \hline
         \hline
        $\V{w}_1$ & $1$ & 21 & 23 & 24 & 25 & 25 \\  
        & $10^{-4}$ & 23 & 25 & 27 & 27 & 27 \\ \hline 
        \hline
        $\V{w}_2$ & $1$ & 21 & 23 & 24 & 25 & 25 \\ 
        & $10^{-4}$ & 23  & 25 & 27 & 27 & 27 \\ \hline 
        \hline
        $\V{w}_3$ & $1$ & 31 & 36 & 52 & 53 & 56\\ 
        & $10^{-4}$  & 33 & 38 & 55& 56 & 59\\ \hline 
        \hline
        $\V{w}_4$ & $1$ & 23 & 23 & 24 & 25 & 25  \\ 
        & $10^{-4}$ & 24 & 25 & 27 & 27 & 27 \\ \hline 
    \end{tabular}
    \label{Ptri_2D}
\end{table}



\subsection{The three-dimensional example}

This three-dimensional (3D) example is adopted from \textit{deal.II} \cite{dealii} \texttt{step-56}
where $\Omega = (0,1)^3$.
The exact solutions are defined as follows:
\begin{align*}
\V{u} = 
\begin{bmatrix}
    2 \sin(\pi x) \\
    -\pi y \cos(\pi x) \\
    -\pi z \cos(\pi x)
\end{bmatrix}, \quad
p = \sin(\pi x) \cos(\pi y) \sin(\pi z),
\end{align*}
and the right-hand side function is
\begin{align*}
\V{f} = 
\begin{bmatrix}
    2 \mu \pi^2 \sin(\pi x) + \pi \cos(\pi x) \cos(\pi y) \sin(\pi z) \\
    -\mu \pi^3 y \cos(\pi x) - \pi \sin(\pi y) \sin(\pi x) \sin(\pi z)\\
    -\mu \pi^3 z \cos(\pi x) + \pi \sin(\pi x) \cos(\pi y) \cos(\pi z)
\end{bmatrix}.
\end{align*}

The number of MINRES and GMRES iterations required to reach convergence for
the preconditioned systems for $\mu = 1$ and $10^{-4}$ is listed in Tables~\ref{Pdiag_3D} and
\ref{Ptri_3D}, respectively. Both MINRES and GMRES perform similarly as for the 2D example.
More specifically, the number for $\V{w}_1$, $\V{w}_2$, and $\V{w}_4$ (with finite $\gamma$) stays small and consistent for meshes of various size
and different values of $\mu$. On the other hand, the number for $\V{w}_3$ (with small $\gamma$) stays relatively small (compared to those without
preconditioning) but changes more significantly for different values of $\mu$ and $h$. These results are consistent with the theoretical analysis
given in Propositions~\ref{pro:MINRES_conv}, \ref{pro:GMRES_conv}, \ref{pro:MINRES_conv-small}, and \ref{pro:GMRES_conv-small}.

\begin{table}[h]
    \centering
        \caption{The 3D Example: The number of MINRES iterations required to reach convergence for preconditioned systems with block diagonal preconditioners, $\mu = 1$ and $\mu = 10^{-4}$.}
    \begin{tabular}{|c|c|c|c|c|c|c|}
        \hline
        $\V{w}$ & \diagbox{$\mu$}{$N$} & 4046 & 7915 & 32724  & 112078 & 266555 \\ \hline
         \hline
        $\V{w}_1$ & $1$ & 59&59  &63  & 67 & 67 \\ 
        & $10^{-4}$ & 62 &62  & 70 &  76 &78  \\ \hline 
        \hline
        $\V{w}_2$ & $1$ & 59 &59 & 63 & 67 & 67\\ 
        & $10^{-4}$ &62  & 62& 70 & 76 &78  \\ \hline 
        \hline
        $\V{w}_3$ & $1$ & 59&59  &63  & 67 & 67\\ 
        & $10^{-4}$  &110 & 114& 126& 76& 80\\ \hline 
        \hline
         $\V{w}_4$ & $1$ & 59&59  &63  & 67 & 67 \\ 
         & $10^{-4}$ & 60 &62  & 70 &  76 &78  \\
         \hline
    \end{tabular}
    \label{Pdiag_3D}
\end{table}

\begin{table}[h]
    \centering
        \caption{The 3D Example: The number of GMRES iterations required to reach convergence for preconditioned systems with block triangular preconditioners, $\mu = 1$ and $\mu = 10^{-4}$.}
    \begin{tabular}{|c|c|c|c|c|c|c|}
        \hline
         $\V{w}$ & \diagbox{$\mu$}{$N$} & 4046 & 7915 & 32724  & 112078 & 266555 \\ \hline
         \hline
        $\V{w}_1$ & 1 & 30 & 30 & 31  & 32 & 33 \\ 
        & $10^{-4}$ & 34  & 35 & 37 & 38 & 38\\ \hline 
        \hline
        $\V{w}_2$ & $1$ & 30 &30 & 31 & 32 & 33\\ 
        & $10^{-4}$ &34  & 35& 37 & 38 &38   \\ \hline 
        \hline
        $\V{w}_3$ & $1$ &53 &59 &60 & 63 & 34\\ 
        & $10^{-4}$ &55 & 59& 65& 89& 271 \\ \hline 
        \hline
         $\V{w}_4$ & $1$ &30  & 30 & 31 &  32 & 33 \\
         & $10^{-4}$ & 35& 35 & 37 & 38 & 38 \\ 
         \hline
    \end{tabular}
    \label{Ptri_3D}
\end{table}

\section{Conclusions}
\label{SEC:conclusions}

In the previous sections we have studied the convergence of MINRES and GMRES for the iterative solution of
the lowest-order weak Galerkin finite element approximation for Stokes flow with general regularization.
The original saddle point system \eqref{scheme_matrix} of the Stokes flow is singular,
as shown in Lemma~\ref{lem:B0-1}.
Moreover, when a nonhomogeneous Dirichlet boundary condition is used,
the discrete system is inconsistent in general due to the error in the numerical approximation of the boundary datum.
This inconsistency and singularity cause difficulties in solving the system.
To address these issues, we have proposed a general regularization strategy with which a term $-\frac{\rho}{\mu} \V{w}\V{w}^T$
is added to the zero $(2,2)$-block, where $\V{w}$ is a unit vector satisfying $\V{w}^T \V{1} \neq 0 $ (see \eqref{w_def}),
$\V{1}$ is a unit basis vector of the null space of $(B^{\circ})^T$, and $\rho > 0$ is a regularization constant.
The regularization can be viewed as an attempt to enforce $\V{w}^T \V{p}_h = 0$, i.e., projecting the pressure variable
into the orthogonal complement of $\text{span}(\V{w})$.
Proposition~\ref{thm:reg_scheme_err} states that the regularized system \eqref{scheme_reg} can achieve the optimal-order
convergence provided that the nonhomogeneous Dirichlet boundary datum is approximated numerically with sufficient accuracy.

We have considered block diagonal and triangular Schur complement preconditioning for the efficient iterative solution of the regularized system \eqref{scheme_reg}.
The situation with finite $\rho$ and $\gamma$ was studied in Section~\ref{sec:finite-gamma}.
With block diagonal Schur complement preconditioning, bounds for the eigenvalues of the preconditioned system
and for the residual of MINRES are given in Lemma~(\ref{lem:eigen_bound_diag}) and Proposition~\ref{pro:MINRES_conv}.
These bounds demonstrate that the convergence of MINRES is essentially independent of $h$ and $\mu$.
Additionally, the convergence of GMRES for the regularized system with block triangular Schur complement preconditioning
has been studied and shown to be essentially independent of $h$ and $\mu$ (cf. Proposition~\ref{pro:GMRES_conv}).

The situation with small $\gamma$ was studied in Section~\ref{sec:small-gamma}. In this situation, $\rho$ is taken to be small
and the regularization term is considered as a perturbation to the original singular system.
Convergence estimates of the residual for MINRES and GMRES are given in Propositions~\ref{pro:MINRES_conv-small} and \ref{pro:GMRES_conv-small}, respectively.
These bounds indicate that the convergence factor is essentially independent of $h$ and $\mu$
while the number of iterations required to reach convergence depends on $\log(\gamma^2 N \rho)$.

The numerical results in two and three dimensions were presented in Section~\ref{SEC:numerical}, where both the finite and small $\gamma$ were tested.
The effectiveness of block Schur complement preconditioning for the regularized system was demonstrated.

It is worth noting that for the efficient solution of the regularized system \eqref{scheme_reg}, a choice of $\V{w}$ with finite
$\gamma = \V{w}^T \V{1}$ should be used. With such a choice, the theoretical and numerical analysis presented in this work
shows that the convergence of MINRES and GMRES with block Schur complement preconditioning is independent of $\mu$ and $h$.

\section*{Acknowledgments}

W.~Huang was supported in part by the Simons Foundation grant MPS-TSM-00002397.


\appendix

\section{Appendix: Proof of Lemma~\ref{eigen_bound2}}
\label{sec:eigen_bound2-proof}

\begin{proof}
Lemma~\ref{lem:S-22} implies that the upper bound of the eigenvalues of $\hat{S}^{-1}S$ is $d$.
    
For the lower bound, denote the eigenpairs of $B^{\circ}(A)^{-1} (B^{\circ})^T$
by $(\V{u}_i, \lambda_i)$ ($i = 1,2,\dots,N$ and $\lambda_1\le \lambda_2 \le \cdots \le \lambda_N$), i.e.,
    \begin{align*}
        B^{\circ}(A)^{-1} (B^{\circ})^T \V{u}_i = \lambda_i \V{u}_i,\quad i = 1,2,\dots, N.
    \end{align*}
From Lemma~\ref{lem:B0-1} we have $\lambda_1 = 0$ and  $\V{u}_1 = \V{1}$ (cf. \eqref{vector-1}).
For other eigenvalues, recall that the non-zero eigenvalues of $(M_p^{\circ})^{-1}B^{\circ}(A)^{-1} (B^{\circ})^T$ are located in
$[\beta^2,d]$ \cite{HuangWang_CiCP_2025}, where $\beta$ is the inf-sup constant.
Thus, the non-zero eigenvalues of $B^{\circ}(A)^{-1} (B^{\circ})^T$ are bounded below by
    \begin{align}
        \lambda_i \geq \beta^2 \lambda_{\min}(M_p^{\circ}), \quad i = 2, \dots, N.
        \label{eigen_bound3}
    \end{align}
Moreover, the unit vector $\V{w}$ can be expressed as
\begin{align}
    \V{w} = \gamma \V{1} +   \sum_{i = 2}^N b_i \V{u}_i,
    \quad \text{with} \quad \gamma^2 + \sum_{i = 2}^N b_i^2 = 1, \quad \gamma = \V{1}^T \V{w} \neq 0 .
\label{b1-1}
\end{align}

Consider an arbitrary nonzero vector $\V{v}$
\[
\V{v} = \sum_{i = 1}^N a_i \V{u}_i .
\]
Using \eqref{eigen_bound3} and the fact that the mass matrix $M_p^{\circ}$ is diagonal,
$\lambda_{\min}(M_p^{\circ}) = \min_{K} |K|$, and $\lambda_{\max}(M_p^{\circ}) = \max_{K}|K|$, we have
 \begin{align}
     \frac{\V{v}^T S \V{v}}{\V{v}^T \hat{S} \V{v}}
     & = \frac{\V{v}^T ( \rho \V{w}\V{w}^T + B^{\circ}(A)^{-1} (B^{\circ})^T) \V{v}}{\V{v}^T\Big( \rho \V{w}\V{w}^T + M_p^{\circ}\Big) \V{v}}
     \displaystyle \notag
     \\ &\geq 
      \frac{\displaystyle \rho \Big(a_1 \gamma + \sum_{i = 2}^N a_i b_i\Big)^2 +  \sum_{i = 2}^N \lambda_i a_i^2}{\displaystyle\rho \Big(a_1 \gamma + \sum_{i = 2}^N a_i b_i\Big)^2 + \lambda_{\max}(M_p^{\circ}) \sum_{i=1}^N  a_i^2} \notag
     \\
     & \geq 
      \frac{\displaystyle \rho \Big(a_1 \gamma + \sum_{i = 2}^N a_i b_i\Big)^2 + \beta^2 \lambda_{\min}(M_p^{\circ}) \sum_{i = 2}^N  a_i^2}{\displaystyle\rho \Big(a_1 \gamma + \sum_{i = 2}^N a_i b_i\Big)^2+ \lambda_{\max}(M_p^{\circ}) \sum_{i=1}^N  a_i^2}.
     \label{eigen_bound1}
    \end{align}
We need to simplify the right-hand side further.
Denote $\V{a} = \begin{bmatrix}
        a_1, \dots,a_N
    \end{bmatrix}^T$ and $\V{b} = \begin{bmatrix}
        \gamma,b_2,\dots,b_n
    \end{bmatrix}^T.$
Express $\V{a}$ in terms of $\V{b}$ into
\[
\V{a} = \alpha \V{b} + \V{c},
\] 
where $\alpha = \V{b}^T \V{a}$ and $\V{b}^T \V{c} = 0$, i.e.,
\begin{align}
    \alpha = \V{b}^T \V{a} = a_1 \gamma +  \sum_{i = 2}^N a_i b_i, \quad \gamma c_1 = - \sum_{i = 2}^N b_i c_i. 
\label{abc_term}
\end{align}
From this, (\ref{b1-1}), and the Cauchy-Schwarz inequality, we have
    \begin{align}
        \displaystyle \sum_{i = 1}^N a_i^2 
       & = (\alpha \V{b} + \V{c})^T (\alpha \V{b} + \V{c})
       = \alpha^2 + \sum_{i=1}^N c_i^2 = \alpha^2 + \sum_{i=2}^N c_i^2 + c_1^2 \notag
        \\
        & = \alpha^2 + \sum_{i=2}^N c_i^2 + \frac{1}{\gamma^2}{\Big(\sum_{i=2}^N b_ic_i\Big)^2} \notag
        \\
        & \leq \alpha^2 + \sum_{i=2}^N c_i^2 + \frac{1}{\gamma^2} \sum_{i=2}^N b_i^2 \sum_{i=2}^N c_i^2 \notag
        \\
        & = \alpha^2 + \sum_{i=2}^N c_i^2 + \frac{1}{\gamma^2} (1-\gamma^2) \sum_{i=2}^N c_i^2 \notag
        \\
        & = \alpha^2  + \frac{1}{\gamma^2}  \sum_{i=2}^N c_i^2.
        \label{ai_1}
    \end{align}
    Similarly, we have
    \begin{align}
        \sum_{i = 2}^N a_i^2 & = \sum_{i = 1}^N a_i^2 - a_1^2
        = \alpha^2 + \sum_{i=1}^N c_i^2  - (\alpha \gamma + c_1)^2 \notag
        \\ & = \alpha^2(1-\gamma^2) + \sum_{i=2}^N c_i^2 + 2\alpha \sum_{i=2}^N b_i c_i \notag
        \\ & \geq \alpha^2(1-\gamma^2) + \sum_{i=2}^N c_i^2 - 2 |\alpha| \Big(\sum_{i=2}^N b_i^2\Big)^{\frac{1}{2}} \Big(\sum_{i=2}^N c_i^2\Big)^{\frac{1}{2}} \notag
        \\ & = \alpha^2(1-\gamma^2) + \sum_{i=2}^N c_i^2 - 2 |\alpha|  \Big(1-\gamma^2\Big)^{\frac{1}{2}} \Big(\sum_{i=2}^N c_i^2\Big)^{\frac{1}{2}} \notag
        \\
        & = \Bigg( \Big(\sum_{i=2}^N c_i^2\Big)^{\frac{1}{2}} - |\alpha| \Big({1-\gamma^2}\Big)^{\frac{1}{2}}\Bigg)^2 .
        \label{ai_2}
    \end{align}
    Substituting \eqref{ai_1} and \eqref{ai_2} into \eqref{eigen_bound1}, we get
    \begin{align}
        \frac{\V{v}^T S \V{v}}{\V{v}^T \hat{S} \V{v}}
     & \geq \frac{\displaystyle \rho \alpha^2 + \beta^2 \lambda_{\min}(M_p^{\circ}) \Big( (\sum_{i=2}^N c_i^2)^{\frac{1}{2}} - |\alpha| (1-\gamma^2)^{\frac{1}{2}} \Big)^2}{\displaystyle \rho \alpha^2 + \frac{\displaystyle \lambda_{\max}(M_p^{\circ})}{\gamma^2} \Big( \alpha^2 \gamma^2  + \sum_{i=2}^N c_i^2\Big)}.
     \label{min_1}
    \end{align}
To find the minimum of the right-hand side of \eqref{min_1}, we define
    \[
    x = \frac{ \displaystyle \Big(\sum_{i=2}^N c_i^2\Big)^{\frac{1}{2}}}{|\alpha|}.
    \] 
Notice that $x$ is non-negative.
    Then, we can rewrite \eqref{min_1} as
    \begin{align}
        \frac{\V{v}^T S \V{v}}{\V{v}^T \hat{S} \V{v}}
     & \geq \frac{\rho + \beta^2 \lambda_{\min}(M_p^{\circ}) \Big( x - (1-\gamma^2)^{\frac{1}{2}}\Big)^{2}}{\rho + \displaystyle\frac{\lambda_{\max}(M_p^{\circ})}{\gamma^2} (\gamma^2 + x^2)}.
     \label{min_2}
    \end{align}
    For simplicity of notation, we denote 
    \begin{align}
          & \delta = (1-\gamma^2)^{\frac{1}{2}} \geq 0, \quad \sigma = \frac{\lambda_{\max}(M_p^{\circ})}{\gamma^2} > 0, \quad \eta = \beta^2 \lambda_{\min}(M_p^{\circ}) > 0,
    \label{min_3}  
    \\
    & f(x) = \frac{\rho + \beta^2 \lambda_{\min}(M_p^{\circ}) \Big( x - (1-\gamma^2)^{\frac{1}{2}}\Big)^{2}}{\rho + \displaystyle\frac{\lambda_{\max}(M_p^{\circ})}{\gamma^2} (x^2 + \gamma^2)}
     = \frac{\rho + \eta (x-\delta)^2}{\rho + \sigma (x^2+\gamma^2)}.
     \notag
    \end{align}
We would like to find a lower bound of $f(x)$ on $[0,+\infty)$. To this end, we find the first derivative of this function as
    \begin{align*}
        f'(x) & = \frac{2 \eta (x-\delta) \Big(\rho+\sigma (x^2 + \gamma^2)\Big) - 2\sigma x \Big(\rho+\eta (x-\delta)^2\Big)}{\Big(\rho + \sigma (x^2+\gamma^2)\Big)^2}
        = \frac{2 \; g(x)}{\Big(\rho + \sigma (x^2+\gamma^2)\Big)^2} ,
    \end{align*}
    where
    \begin{align*}
    g(x) = \sigma \delta \eta x^2 - \Big(\rho (\sigma - \eta) + \eta \sigma (\delta^2 - \gamma^2)\Big)x - \delta \eta  (\rho + \sigma \gamma^2).
    \end{align*}

We first consider the case $\delta = 0$ (i.e., $\gamma^2 = 1$). For this case, 
    \[
    g(x) = - \Big(\rho (\sigma - \eta) - \eta \sigma \Big)x.
    \]
Notice that $\sigma - \eta >0$, $\sigma = \mathcal{O}(h^d)$, $\eta = \mathcal{O}(h^d)$, and $\sigma \eta = \mathcal{O}(h^{2d})$ as $h \to 0$.
It follows that $g(x) \le 0$ and $f(x)$ decreases on $[0,+\infty)$. Thus,
\begin{align}
f(x) \ge f(+\infty) = \frac{\eta}{\sigma} = \frac{\beta^2 \lambda_{\min} (M_p^{\circ})}{\lambda_{\max} (M_p^{\circ})} .
\label{fmin-1}
\end{align}

Next, we consider the case $\delta > 0$.
Setting $g(x) = 0$ yields two different roots, 
    \begin{align*}
    x_1 = \frac{ \Big( \rho (\sigma - \eta) + \sigma \eta (\delta^2 - \gamma^2) \Big) - \sqrt{ \Big( \rho (\sigma - \eta) + \sigma \eta (\delta^2 - \gamma^2) \Big)^2 + 4 \sigma \delta^2 \eta^2 (\rho + \sigma \gamma^2)} }{2 \sigma \delta \eta},
    \\
    x_2 = \frac{ \Big( \rho (\sigma - \eta) + \sigma \eta (\delta^2 - \gamma^2) \Big) + \sqrt{ \Big( \rho (\sigma - \eta) + \sigma \eta (\delta^2 - \gamma^2) \Big)^2 + 4 \sigma \delta^2 \eta^2 (\rho + \sigma \gamma^2)} }{2 \sigma \delta \eta}.
    \end{align*}
Since $x_1 < 0$ and $x_2 > 0$, we only need to consider $x_2$.
After some algebraic manipulation, we obtain 
    \[
    x_2 \approx \frac{\rho (\sigma - \eta)}{\sigma \delta \eta} = {\mathcal{O}(h^{-d})} .
    \]
Since $g(0) < 0$, $g(x_2) = 0$, and $g(+\infty) > 0$, we know that $f(x)$ is decreasing on $(0,x_2)$ and increasing on $(x_2, +\infty)$.
Thus,
    \begin{align*}
        f(x) \ge f(x_2) = \frac{\rho + \eta (x_2-\delta)^2}{\rho + \sigma (x_2^2+\gamma^2)}
        = \frac{\eta}{\sigma} + \mathcal{O}\big (\frac{1}{x_2}\big ) 
        = \beta^2  \frac{\lambda_{\min} (M_p^{\circ})}{\lambda_{\max} (M_p^{\circ})} \gamma^2 + \mathcal{O}(h^d) .
    \end{align*}
Combining this and (\ref{fmin-1}) we obtain (\ref{eigen_bound2-eq}).
\end{proof}



\begin{thebibliography}{10}

\bibitem{Ainsworth_SINUM_2022}
M.~Ainsworth and C.~Parker.
\newblock A mass conserving mixed $hp$-{FEM} scheme for {Stokes} flow. {Part}
  {III}: Implementation and preconditioning.
\newblock {\em SIAM J. Numer. Anal.}, 60:1574--1606, 2022.

\bibitem{dealii}
D.~Arndt, W.~Bangerth, B.~Blais, T.~C. Clevenger, M.~Fehling, A.~Grayver,
  T.~Heister, L.~Heltai, M.~Kronbichler, M.~Maier, P.~Munch, J.-P. Pelteret,
  R.~Rastak, I.~Tomas, B.~Turcksin, Z.~Wang, and D.~Wells.
\newblock The deal.ii library, version 9.2.
\newblock {\em J. Numer. Math.}, 28:131--146, 2020.

\bibitem{Bacuta-2019}
C.~Bacuta and J.~Jacavage.
\newblock Saddle point least squares preconditioning of mixed methods.
\newblock {\em Comput. Math. Appl.}, 77:1396--1407, 2019.

\bibitem{BenziGolubLiesen-2005}
M.~Benzi, G.~H. Golub, and J.~Liesen.
\newblock Numerical solution of saddle point problems.
\newblock {\em Acta Numer.}, 14:1--137, 2005.

\bibitem{Benzi2008}
M.~Benzi and A.~J. Wathen.
\newblock {\em Some Preconditioning Techniques for Saddle Point Problems},
  pages 195--211.
\newblock Springer Berlin Heidelberg, Berlin, Heidelberg, 2008.

\bibitem{Bochev_SIAM_2005}
P.~Bochev and R.~B. Lehoucq.
\newblock On the finite element solution of the pure {N}eumann problem.
\newblock {\em SIAM Rev.}, 47:50--66, 2005.

\bibitem{Boffi-2008}
D.~Boffi, F.~Brezzi, and M.~Fortin.
\newblock {\em Mixed Finite Elements, Compatibility Conditions, and
  Applications}, chapter Chapter 2: Finite Elements for the {Stokes} Problem,
  pages 45--100.
\newblock Springer, Berlin, 2008.

\bibitem{DassiScacchi_CMAME_2020}
F.~Dassi and S.~Scacchi.
\newblock Parallel block preconditioners for three-dimensional virtual element
  discretizations of saddle-point problems.
\newblock {\em Comput. Methods Appl. Mech. Engrg.}, 372:113424, 17, 2020.

\bibitem{Elman-2014}
H.~C. Elman, D.~J. Silvester, and A.~J. Wathen.
\newblock {\em Finite elements and fast iterative solvers: with applications in
  incompressible fluid dynamics}.
\newblock Numerical Mathematics and Scientific Computation. Oxford University
  Press, Oxford, second edition, 2014.

\bibitem{Greenbaum-1997}
A.~Greenbaum.
\newblock {\em Iterative methods for solving linear systems}, volume~17 of {\em
  Frontiers in Applied Mathematics}.
\newblock Society for Industrial and Applied Mathematics (SIAM), Philadelphia,
  PA, 1997.

\bibitem{GWYNLLYW20061027}
D.~Gwynllyw and T.~Phillips.
\newblock On the enforcement of the zero mean pressure condition in the
  spectral element approximation of the {Stokes} problem.
\newblock {\em Comput. Methods Appl. Mech. Engrg.}, 195:1027--1049, 2006.

\bibitem{HuangWang_CiCP_2015}
W.~Huang and Y.~Wang.
\newblock Discrete maximum principle for the weak {G}alerkin method for
  anisotropic diffusion problems.
\newblock {\em Comm. Comput. Phys.}, 18:65–90, 2015.

\bibitem{HuangWang_arxiv_2024_b}
W.~Huang and Z.~Wang.
\newblock Consistency enforcement for the iterative solution of weak Galerkin
  finite element approximation of Stokes flow.
\newblock {\em arXiv:2412.09865}, 2024.

\bibitem{HuangWang_CiCP_2025}
W.~Huang and Z.~Wang.
\newblock Convergence analysis of iterative solution with inexact block
  preconditioning for weak {Galerkin} finite element approximation of {Stokes}
  flow.
\newblock {\em Comm. Comput. Phys.} (to appear) arxiv: 2409.16477.

\bibitem{LeeWuXuZika_2007}
Y.-J. Lee, J.~Wu, J.~Xu, and L.~Zikatanov.
\newblock Robust subspace correction methods for nearly singular systems.
\newblock {\em Math. Models Methods Appl. Sci.}, 17:1937--1963, 2007.

\bibitem{LinLiuFarrah_JCAM_2015}
G.~Lin, J.~Liu, and F.~Sadre-Marandi.
\newblock A comparative study on the weak {G}alerkin, discontinuous {G}alerkin,
  and mixed finite element methods.
\newblock {\em J. Comput. Appl. Math.}, 273:346–362, 2015.

\bibitem{MeierBanschFrank_CMAME_2022}
A.~Meier, E.~B\"ansch, and F.~Frank.
\newblock Schur preconditioning of the {S}tokes equations in channel-dominated
  domains.
\newblock {\em Comput. Methods Appl. Mech. Engrg.}, 398:Paper No. 115264, 24,
  2022.

\bibitem{Mu.2020}
L.~Mu.
\newblock Pressure robust weak {Galerkin} finite element methods for {Stokes}
  problems.
\newblock {\em SIAM J. Sci. Comput.}, 42:B608--B629, 2020.

\bibitem{MurphyGolubWathen_SISC_2000}
M.~F. Murphy, G.~H. Golub, and A.~J. Wathen.
\newblock A note on preconditioning for indefinite linear systems.
\newblock {\em SIAM J. Sci. Comput.}, 21:1969--1972, 2000.

\bibitem{MINRES-1975}
C.~C. Paige and M.~A. Saunders.
\newblock Solutions of sparse indefinite systems of linear equations.
\newblock {\em SIAM J. Numer. Anal.}, 12:617--629, 1975.

\bibitem{Rhebergen_SISC_2022}
S.~Rhebergen and G.~N. Wells.
\newblock Preconditioning for a pressure-robust {HDG} discretization of the
  {Stokes} equations.
\newblock {\em SIAM J. Sci. Comput.}, 44:A583--A604, 2022.

\bibitem{SilvesterWathen_SINUM_1994}
D.~Silvester and A.~Wathen.
\newblock Fast iterative solution of stabilised {S}tokes systems. {II}. {U}sing
  general block preconditioners.
\newblock {\em SIAM J. Numer. Anal.}, 31:1352--1367, 1994.

\bibitem{Vorst_2003}
H.~A. van~der Vorst.
\newblock {\em Iterative {Krylov} Methods for Large Linear Systems}.
\newblock Cambridge Monographs on Applied and Computational Mathematics.
  Cambridge University Press, 2003.

\bibitem{WANG202290}
Z.~Wang, R.~Wang, and J.~Liu.
\newblock Robust weak {G}alerkin finite element solvers for {S}tokes flow based
  on a lifting operator.
\newblock {\em Comput. Math. Appl.}, 125:90--100, 2022.

\bibitem{WathenSilvester_SINUM_1993}
A.~Wathen and D.~Silvester.
\newblock Fast iterative solution of stabilised {S}tokes systems. {I}. {U}sing
  simple diagonal preconditioners.
\newblock {\em SIAM J. Numer. Anal.}, 30:630--649, 1993.

\bibitem{Watkins-2010}
D.~S. Watkins.
\newblock {\em Fundamentals of matrix computations}.
\newblock Pure and Applied Mathematics (Hoboken). John Wiley \& Sons, Inc.,
  Hoboken, NJ, third edition, 2010.

\end{thebibliography}

\end{document}